\def\eqref#1{equation~\ref{#1}}
\def\1{\bm{1}}
\def\vu{{\bm{u}}}
\def\vv{{\bm{v}}}
\def\mB{{\bm{B}}}
\def\mD{{\bm{D}}}
\def\mG{{\bm{G}}}
\def\mM{{\bm{M}}}
\def\mN{{\bm{N}}}
\def\mU{{\bm{U}}}
\def\mV{{\bm{V}}}
\def\mW{{\bm{W}}}
\def\mX{{\bm{X}}}
\def\mY{{\bm{Y}}}
\def\mZ{{\bm{Z}}}
\def\mSigma{{\bm{\Sigma}}}
\DeclareMathAlphabet{\mathsfit}{\encodingdefault}{\sfdefault}{m}{sl}
\SetMathAlphabet{\mathsfit}{bold}{\encodingdefault}{\sfdefault}{bx}{n}
\newcommand{\R}{\mathbb{R}}
\DeclareMathOperator*{\argmax}{arg\,max}
\DeclareMathOperator*{\argmin}{arg\,min}
\DeclareMathOperator{\sign}{sign}
\newcommand{\cB}{\mathcal{B}}
\newcommand{\cN}{\mathcal{N}}
\newcommand{\cO}{\mathcal{O}}
\newcommand{\cS}{\mathcal{S}}
\newcommand{\norm}[1]{\lVert #1\rVert}
\DeclarePairedDelimiter{\normf}{\|}{\|_\mathrm{F}}
\DeclarePairedDelimiter{\norms}{\|}{\|_{\mathrm{2}}}
\DeclarePairedDelimiter{\normc}{\|}{\|_{\mathrm{C}}}
\DeclarePairedDelimiter{\normn}{\|}{\|_{*}}
\DeclarePairedDelimiter{\normkfk}{\|}{\|_{\text{KF-}k}}
\DeclarePairedDelimiter{\normfstar}{\|}{\|_\mathrm{F*}}
\DeclarePairedDelimiter{\normftwo}{\|}{\|_\mathrm{F2}}
\DeclarePairedDelimiter{\abs}{\lvert}{\rvert}
\def\<#1,#2>{\langle #1,#2\rangle}
\DeclareMathOperator{\tr}{tr}
\DeclareMathOperator{\diag}{diag}
\renewcommand{\epsilon}{\varepsilon}
\newcommand{\Rmn}{\R^{m\times n}}
\newcommand{\figureref}[1]{Fig.~\ref{#1}}
\newcommand{\tableref}[1]{Table~\ref{#1}}
\newcommand{\sectionref}[1]{Sect.~\ref{#1}}
\newcommand{\lemmaref}[1]{Lemma~\ref{#1}}
\newcommand{\corollaryref}[1]{Corollary~\ref{#1}}
\journalname{JOTA}
\begin{document}

\title{Ky Fan Norms and Beyond: Dual Norms and Combinations for Matrix Optimization}
\titlerunning{Ky Fan Norms and Beyond for Matrix Optimization}

\author{Alexey Kravatskiy \and Ivan Kozyrev \and Nikolai Kozlov \and Alexander Vinogradov \and Daniil Merkulov \and Ivan Oseledets}

\institute{Alexey Kravatskiy, Corresponding author \at
             MIRIAI \\
             Russia \\
             \email{kravatskii.a@miriai.org}
           \and
             Ivan Kozyrev \at
             MIPT, INM RAS \\
             Russia \\
             \email{kozyrev.in@phystech.edu}
           \and
             Nikolai Kozlov \at
             MIPT \\
             Russia \\
             \email{kozlov.na@phystech.edu}
           \and
             Alexander Vinogradov \at
             MIPT \\
             Russia \\
             \email{vinogradov.am@phystech.edu}
           \and
             Daniil Merkulov \at
             MIPT, Skoltech, HSE, AI4Science \\
             Russia \\
             \email{daniil.merkulov@phystech.edu}
           \and
             Ivan Oseledets \at
             AIRI, Skoltech, INM RAS \\
             Russia \\
             \email{i.oseledets@skoltech.ru}
}

\date{Received: date / Accepted: date}

\maketitle

\begin{abstract}
   In this article, we explore the use of various matrix norms for optimizing functions of weight matrices, a crucial problem in deep learning. Moving beyond the spectral norm that underlies the Muon update, we leverage the duals of the Ky Fan norms to introduce the \emph{Fanion} family of linear minimization oracle (LMO) algorithms, which are closely related to Muon, $\nu$-SAM, and Dion. Staying inside the LMO, we construct the families of \emph{F-Fanions} and \emph{S-Fanions}, whose updates are convex combinations of the updates of Fanions and Normalized SGD or SignSGD, respectively. The most promising algorithms in these families are \emph{F-Muon} and \emph{S-Muon}. By conducting an extensive empirical study of all three algorithm families across a wide range of tasks and settings, we demonstrate that F-Muon and S-Muon consistently match Muon's performance, while outperforming Muon on a synthetic smooth convex problem.
 \end{abstract}
\keywords{Matrix optimization \and Linear minimization oracle \and Muon optimizer \and Ky Fan norms}
\subclass{90C06 \and 68T07 \and 15A60 \and 90C30}

 \section{Introduction}
 
 Minimizing loss functions in unprecedentedly high-dimensional spaces has recently become an integral and crucial part of training large language models. Hence, new scalable, time- and memory-efficient algorithms have been demanded. Besides the well-known Adam \citep{kingma2014adam} and AdamW \citep{loshchilov2017decoupled}, recently proposed Muon \citep{jordan2024muon} has shown promising results on training very large models \citep{liu2025muonscalablellmtraining}. Its key difference from Adam and AdamW is that it has been constructed specifically for optimizing functions of weight matrices, which are common in deep learning.
 
 From a theoretical perspective, a key innovation of Muon was its principled derivation of the update rule, which emerged as the solution to an optimization problem constrained by the RMS-to-RMS norm (a scaled version of the spectral norm) \citep{bernstein2025deriving}.
 
 Motivated by the success of Muon, many generalizations and variations of it were proposed. Among the notable ones are Scion \citep{pethick2025training}, Dion \citep{ahn2025dion} and Gluon \citep{riabinin2025gluon}. Those works try to explain Muon's efficiency and establish convergence bounds. One central question, however, remains unanswered:
 
 \emph{Can the employment of a norm other than an operator norm in Muon-like algorithms yield comparable or superior empirical results?}
 
 In this article, we tackle this question by demonstrating that there are many viable non-operator norms. We leverage the family of norms dual to Ky Fan $k$-norms to derive a new family of \emph{Fanions}, algorithms with low-rank updates. This approach theoretically explains the backbone of Dion's update \citep{ahn2025dion} and generalizes the memory-motivated application of the nuclear norm to Sharpness-Aware Minimization \citep{pethick2025sam}. As was done with Muon, we develop an effective procedure for computing Fanions' updates: the Lanczos algorithm is the solution (see \sectionref{sec:matrix-side}).
 
 Working with dual norms and various convex combinations of norms, we construct the families of \emph{F-Fanions} and \emph{S-Fanions}, which are hybrids of Fanions with Normalized SGD and SignSGD, respectively.
 
 In \sectionref{sec:experiments}, we compare the performance of these algorithm families on various model and real-world problems:
 \begin{itemize}
   \item Synthetic smooth convex problem with a matrix argument
   \item CIFAR-10 airbench \citep{cifar2023airbench}
   \item Pre-training NanoGPT and GPT-2 Medium on the FineWeb dataset \citep{modded_nanogpt_2024}
   \item Fine-tuning NanoGPT on TinyStories dataset \citep{eldan2023tinystoriessmalllanguagemodels}
 \end{itemize}
 
 Our experiments reveal important insights into the role of matrix norms in optimization. First, using the example of Neon (the rank-one Fanion), we show that not every LMO-based algorithm is effective, despite sharing the same asymptotics in the general bounds of \citep{kovalev2025understanding} and \citep{riabinin2025gluon}. This suggests that existing theoretical guarantees should be refined to better explain empirical performance.
 
 Second, our experiments on real-world tasks demonstrate that the choice of underlying matrix norm is remarkably flexible. On CIFAR-10 airbench, properly-tuned F-Muon and S-Muon achieve $94.02 \pm 0.13\%$ and $94.03 \pm 0.13\%$ accuracy, matching Muon's $94.01 \pm 0.13\%$ performance. After 1750 iterations on NanoGPT training, F-Muon achieves 3.281 cross-entropy loss, while Muon achieves 3.279. Similarly, after 5960 iterations of GPT-2 Medium training, we get 2.9215 with F-Muon and 2.9198 with Muon. Finally, S-Muon matches Muon in fine-tuning NanoGPT on TinyStories, while F-Muon is far more robust to learning rate choice than Muon. These results show that Muon-like algorithms can maintain competitive performance even when the underlying norm constraint is significantly modified, providing an affirmative answer to the central question posed above. Moreover, the tools from \sectionref{sec:conic_combo} give researchers considerable flexibility in designing algorithms that need not be direct modifications of Muon.
 
 \section{Preliminaries: Linear Minimization Oracle Framework}
 
 Training a neural network is essentially an optimization of a function of several weight matrices and a few vectors. Let us start by considering the problem of minimizing a differentiable function of a \emph{single} matrix (the connection to the general case is presented in \sectionref{sec:lmo_for_nn}):
 \begin{equation}\label{eq:matrix_problem}
   F(\cdot)\colon \Rmn \to \R\,,\qquad F(\mX) \to \min_{\mX \in \Rmn}\,
 \end{equation}
 We equip the matrix space $\Rmn$ with a standard dot product $\<\cdot, \cdot> \to \R$ and a norm $\norm{\cdot}\colon \Rmn \to \R_+$, which does not have to coincide with the Frobenius norm $\normf{\cdot} = \sqrt{\<\cdot,\cdot>}$. The dual norm $\norm{\cdot}^\dagger\colon \Rmn \to \R_+$ that is associated with $\norm{\cdot}$ is defined as
 \begin{equation}\label{eq:dual_norm}
   \norm{\mM}^\dagger = \sup_{\mD \in \Rmn\,:\norm{\mD}\leq 1} \<\mM,\mD>\,.
 \end{equation}
 
 Such problems can be solved with an iterative algorithm based on the Linear Minimization Oracle (LMO):
 \begin{equation}\label{eq:lmo}
   \mathrm{LMO}(\mM^t) \in \argmin_{\mD \in \cS} \langle\mM^t, \mD\rangle\,,
 \end{equation}
 where $\mM^t$ is the effective update direction and $\cS \subset \Rmn$ is a constraint set. The algorithm proceeds as follows:
 \begin{equation}\label{eq:simple_update}
   \begin{aligned}
   \mB^t &= \beta \mB^{t-1} + (1 - \beta) \nabla F(\mX^t) \quad \text{(momentum buffer)},\\
   \mM^t &= \begin{cases}
   \nabla F(\mX^t) & \text{(no momentum)},\\
   \mB^t & \text{(heavy ball)},\\
   \nabla F(\mX^t) + \beta \mB^t & \text{(approximate Nesterov)},
   \end{cases}\\
   \mX^{t + 1} &= \mX^t + \gamma_t\mathrm{LMO}\left(\mM^t\right)\,,
   \end{aligned}
 \end{equation}
 where $\beta \in [0, 1)$ is the momentum coefficient and $\nabla F(\mX^t)$ is either full or stochastic gradient. Throughout our experiments, we employ approximate Nesterov momentum, which matches the implementation in Muon and PyTorch SGD (\texttt{nesterov=True}). It uses the current gradient rather than the true lookahead gradient, trading theoretical guarantees for computational efficiency. From here on, we refer to the stochastic gradient as just the gradient if not otherwise specified.
 
 We are particularly interested in the case when $\cS$ is a unit ball in some norm $\norm{\cdot}$:
 \[
 \cS = \cB_{\norm{\cdot}} = \{ \mD \in \Rmn : \norm{\mD} \leq 1 \}\,.
 \]
 In this case,
 \[
 \argmin_{\mD \in \cS} \<\mM^t, \mD> = - \{ \mD \in \cB_{\norm{\cdot}} : \<\mM^t, \mD> = \norm{\mM^t}^\dagger\}\,,
 \]
 and the update for $\mX^{t + 1}$ in (\ref{eq:simple_update}) simplifies to
 \begin{equation}\label{eq:our_update}
   \mX^{t+1} = \mX^{t} - \gamma_t \{\mD \in \cB_{\norm{\cdot}} : \<\mM^t, \mD> = \norm{\mM^t}^\dagger\}\,.
 \end{equation}
 
 Using this formula, it is easy to compute updates for algorithms induced by various norms $\norm{\cdot}$ \citep{bernstein2024oldoptimizernewnorm, pethick2025training}:
 
 \paragraph{Frobenius norm and Normalized SGD}
 When the norm $\norm{\cdot}$ is the Frobenius norm $\normf{\cdot}$, (\ref{eq:our_update}) turns into
 \begin{equation}\label{eq:nsgd_update}
   \mX^{t+1} = \mX^{t} - \gamma_t \frac{\mM^t}{\normf{\mM^t}}\,,
 \end{equation}
 which recovers Normalized SGD (NSGD).
 
 \paragraph{Spectral norm and Muon}
 When the norm is the spectral norm $\norms{\cdot}$, we get
 \begin{equation}\label{eq:muon_update}
   \mX^{t+1} = \mX^{t} - \gamma_t \mU \mV^\top\,,
 \end{equation}
 which is Muon without the $\sqrt{m/n}$ factor.
 Here, $\mM^t = \mU \mSigma \mV^\top$ is the Singular Value Decomposition (SVD) of $\mM^t$ ($\mU = [\vu_1, \vu_2, \dots, \vu_r]$, $\mSigma = \diag(\sigma_1, \sigma_2, \dots, \sigma_r)$, and $\mV = [\vv_1, \vv_2, \dots, \vv_r]$). Muon can be recovered by taking the RMS-to-RMS norm: $\sqrt{n/m}\norms{\cdot}$.
 
 \paragraph{Chebyshev norm and SignSGD}
 When the norm is the Chebyshev norm $\normc{\cdot}$ (also known as the infinity norm, $\normc{A} \equiv \max_{i,j}\abs{A_{ij}}$), we get
 \begin{equation}\label{eq:signsgd_update}
   \mX^{t+1} = \mX^{t} - \gamma_t \sign(\mM^t)\,,
 \end{equation}
 which recovers SignSGD~\citep{bernstein2018signsgd}. Here, $\sign(\mM^t)$ denotes the element-wise sign function. SignSGD is particularly notable for its communication efficiency in distributed training, as it compresses gradients to 1 bit per parameter.
 
 \section{Beyond the Spectral Norm: Fanions}
 
 \subsection{The Rank Gap Problem}
 
 Having examined the Frobenius norm (NSGD), spectral norm (Muon), and Chebyshev norm (SignSGD), all of which produce full-rank updates, we turn to the nuclear norm $\normn{\cdot}$.
 
 \begin{lemma}\label{lemma:neon_update}
   When $\norm{\cdot} = \normn{\cdot}$, (\ref{eq:our_update}) becomes
   \begin{equation}\label{eq:neon_update}
     \mX^{t+1} = \mX^{t} - \gamma_t \vu_1 \vv_1^\top\,.
   \end{equation}
 \end{lemma}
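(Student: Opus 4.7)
The plan is to invoke the standard duality between the nuclear and spectral norms together with the reformulation in \equationref{eq:our_update}. First, I would recall that the dual of the nuclear norm is the spectral norm, i.e., $\normn{\cdot}^\dagger = \norms{\cdot}$. This is a classical fact: by von Neumann's trace inequality, $\<\mM, \mD> \leq \sum_i \sigma_i(\mM)\sigma_i(\mD)$, and restricting to $\normn{\mD} = \sum_i \sigma_i(\mD) \leq 1$ gives $\<\mM,\mD> \leq \sigma_1(\mM) = \norms{\mM}$.

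Next, given the SVD $\mM^t = \mU\mSigma\mV^\top = \sum_i \sigma_i \vu_i \vv_i^\top$, I would produce a witness achieving this supremum. The natural candidate is $\mD^\star = \vu_1 \vv_1^\top$: it has rank one with a single nonzero singular value equal to one, so $\normn{\mD^\star} = 1$, and by the orthonormality of the left and right singular vectors,
\begin{equation*}
\<\mM^t, \vu_1 \vv_1^\top> = \tr\bigl(\vv_1 \vu_1^\top \mU\mSigma\mV^\top\bigr) = \sigma_1 = \norms{\mM^t} = \normn{\mM^t}^\dagger.
\end{equation*}
Thus $\mD^\star$ lies in the set $\{\mD \in \cB_{\normn{\cdot}} \mid \<\mM^t,\mD> = \normn{\mM^t}^\dagger\}$ appearing on the right-hand side of \equationref{eq:our_update}, and plugging it in yields the claimed update.

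The main subtlety, rather than an obstacle, is that the LMO argmin need not be a singleton: if $\sigma_1 = \sigma_2$ the optimizer is any convex combination of rank-one terms built from a basis of the top singular subspace. I would therefore phrase the statement as saying $\vu_1 \vv_1^\top$ is a valid choice of LMO output, which is exactly how \equationref{eq:our_update} is meant to be read (the set on the right is the set of admissible updates, and we pick one element). When $\sigma_1 > \sigma_2$, the choice is unique up to the sign ambiguity of the singular vector pair, which is resolved by the fact that only the product $\vu_1\vv_1^\top$ enters. This observation completes the derivation of \equationref{eq:neon_update}.
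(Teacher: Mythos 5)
Your proposal is correct and follows essentially the same route as the paper's proof: invoke $\normn{\cdot}^\dagger = \norms{\cdot}$ and exhibit $\vu_1\vv_1^\top$ as a feasible point of the nuclear-norm ball attaining $\sigma_1$ in the inner product. The extra material you include (the von Neumann trace inequality justifying the duality, and the remark on non-uniqueness when $\sigma_1 = \sigma_2$) is sound but not needed beyond what the paper already does.
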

 \begin{proof}
 Since $\normn{\cdot}^\dagger = \norms{\cdot}$, the goal is to reach $\<\mM^t, \mD> = \sigma_1$ in (\ref{eq:our_update}). Note that $\mD = \vu_1 \vv_1^\top$ delivers this value. Indeed, $\normn{\mD}=1$ and by the trace property and orthogonality of the singular vectors,
 \[
 \<\mM^t, \mD> =\<\mU \mSigma \mV^\top, \vu_1 \vv_1^\top> = \tr \diag({\sigma_1, 0, \dots, 0}) = \norms{\mM^t}\,,
 \]
 which completes the proof.\qed
\end{proof}
 
 We call this algorithm \emph{Neon}. The nuclear norm yields rank-one updates, in stark contrast to the full-rank updates of Muon, NSGD, and SignSGD. This raises a natural question: can we derive algorithms with updates of intermediate ranks?
 
 \paragraph{Schatten norms}
 Cesista~\cite{cesista2025schattenp} considered Schatten-$p$ norms:
 \[\norm{\mM^t}_{S_p} = \left(\sum_{i=1}^{\min(m, n)}\sigma_i^p\right)^{1/p}\,,
 \]
 which produce the updates
 \[
 \mX^{t+1} = \mX^t - \gamma_t \mU \frac{\diag\left(\sigma_1^{q-1}, \dots, \sigma_{\min(m,n)}^{q-1}\right)}{\left(\sum_{i=1}^{\min(m,n)}{\sigma_i^q}\right)^{\frac{q-1}{q}}}\mV^\top
 \]
 where $p$ and $q$ satisfy $p^{-1} + q^{-1} = 1$. This formula recovers Neon when $p\rightarrow1$ (provided that $\sigma_1 > \sigma_2$, which holds with probability 1 on real data), NSGD when $p=2$, and Muon when $p\rightarrow\infty$.
 
 However, Schatten norms do not fill the rank gap: when $p > 1$, the update has full rank, while when $p=1$, its rank equals 1. Moreover, computing the update for $p \neq 1, 2, \infty$ appears to require knowing all $\sigma_i$, making the problem as hard as computing the full SVD.
 
 \paragraph{Ky Fan norms}
 Another family of matrix norms could offer a solution: Ky Fan norms. For $k \in \{1, \dots, \min(m, n)\}$, the Ky Fan $k$-norm $\normkfk{\cdot}$ equals $\sum_{i=1}^{k}\sigma_i$, the sum of the $k$ largest singular values. Notable special cases include the Ky Fan $1$-norm (the spectral norm) and the Ky Fan $\min\{m, n\}$-norm (the nuclear norm).
 
 To derive the update formula for arbitrary $k$, we use the expression for the norm dual to the Ky Fan $k$-norm (see, for instance, \citep{bhatia2013matrix}, p. 96):
 \[
     \normkfk{\cdot}^\dagger = \max\left\{\frac{1}{k} \normn{\cdot}, \norms{\cdot}\right\}\,.
 \]
 According to (\ref{eq:our_update}), we need to find $\mD$ such that $\normkfk{\mD} = 1$ and
 \[
 \<\mM^t, \mD> = \max\left\{\frac{1}{k}\sum_{i=1}^{\min(m,n)}\sigma_i,\ \sigma_1\right\}\,.
 \]
 Neon's update $\mD = \vu_1 \vv_1^\top$ achieves $\sigma_1$, while Muon's scaled update $\mD = \frac{1}{k}\mU\mV^\top$ achieves $\frac{1}{k}\sum_{i=1}^{\min(m,n)}\sigma_i$. Thus, the update is either
 \[
 \mX^{t+1} = \mX^{t} - \gamma_t \vu_1 \vv_1^\top\text{ or }\mX^{t+1} = \mX^{t} - \frac{\gamma_t}{k}\mU \mV^\top\,,
 \]
 depending on which of those two expressions achieves a smaller residual on the linear function $L(\mX^{t+1}) := F(\mX^t) + \<\mM^t, \mX^{t+1} - \mX^t>$. The Ky Fan norms thus fail to close the rank gap: the resulting update is either rank-one or full-rank.
 
 \subsection{Solution: Duals to Ky Fan Norms}
 
 Unlike Schatten norms, which satisfy $\norm{\mM^t}_{S_p}^\dagger = \norm{\mM^t}_{S_q}$ (for $p^{-1} + q^{-1} = 1$) and are thus closed under dualization, Ky Fan norms are generally not. Only two exceptional cases exist: the dual to the Ky Fan $1$-norm $\norm{\cdot}_{\text{KF-}1}$ (the spectral norm) is the Ky Fan $\min(m,n)$-norm $\norm{\cdot}_{\text{KF-}\min(m,n)}$ (the nuclear norm), and vice versa. Thus, working with duals of Ky Fan norms can open up new possibilities:
 
 \begin{lemma}\label{lemma:ky_fan_update}
   When $\norm{\cdot} = \normkfk{\mM^t}^\dagger$, (\ref{eq:our_update}) turns into:
   \begin{equation}\label{eq:ky_fan_update}
     \mX^{t+1} = \mX^{t} - \gamma_t \sum_{i=1}^{k}\vu_i \vv_i^\top\,.
   \end{equation}
 \end{lemma}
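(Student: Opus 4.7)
The plan is to verify directly that $\mD^\star = \sum_{i=1}^{k}\vu_i\vv_i^\top$ is a minimizer of $\langle \mM^t, \mD\rangle$ over the unit ball of $\normkfk{\cdot}^\dagger$ (up to sign), then invoke \equationref{eq:our_update}. By biduality, the sign-flipped maximum value must be $(\normkfk{\cdot}^\dagger)^\dagger(\mM^t) = \normkfk{\mM^t} = \sum_{i=1}^{k}\sigma_i$, so it suffices to exhibit a feasible $\mD$ attaining this value.

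First I would compute $\langle \mM^t, \mD^\star\rangle$ using the SVD $\mM^t = \mU\mSigma\mV^\top$ and the trace/orthonormality identity $\vu_j^\top \mM^t \vv_i = \sigma_i \delta_{ij}$, obtaining
\[
\langle \mM^t, \mD^\star\rangle = \sum_{i=1}^{k}\vu_i^\top \mM^t \vv_i = \sum_{i=1}^{k}\sigma_i.
\]
Next I would check feasibility, i.e.\ $\normkfk{\mD^\star}^\dagger \leq 1$, using the formula $\normkfk{\cdot}^\dagger = \max\{\tfrac{1}{k}\normn{\cdot},\,\norms{\cdot}\}$ quoted just before the statement. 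Since the singular values of $\mD^\star$ are exactly $k$ ones followed by zeros (orthonormality of $\{\vu_i\}$ and $\{\vv_i\}$), I get $\normn{\mD^\star}=k$ and $\norms{\mD^\star}=1$, so the max equals $1$.

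Combining the two, $\mD^\star$ is feasible and attains the upper bound $\sum_{i=1}^{k}\sigma_i$ guaranteed by biduality, so $-\mD^\star$ is a minimizer of $\langle\mM^t,\mD\rangle$ on $\cB_{\normkfk{\cdot}^\dagger}$. Plugging into \equationref{eq:our_update} gives the claimed update. The only subtle point — and essentially the sole obstacle — is the direction of duality: one must remember that the norm governing the LMO is $\normkfk{\cdot}^\dagger$, whose dual is $\normkfk{\cdot}$ itself, so the attainable inner product is $\normkfk{\mM^t}$ rather than $\normkfk{\mM^t}^\dagger$; this is also what makes $\mD^\star$ a top-$k$ partial isometry rather than a rank-one or full-rank object.
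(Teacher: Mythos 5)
Your proposal is correct and follows essentially the same route as the paper: identify the target value $\normkfk{\mM^t}=\sum_{i=1}^{k}\sigma_i$ via biduality and exhibit $\mD^\star=\sum_{i=1}^{k}\vu_i\vv_i^\top$ attaining it. You additionally verify feasibility $\normkfk{\mD^\star}^\dagger=\max\{\tfrac{1}{k}\normn{\mD^\star},\norms{\mD^\star}\}=1$, a step the paper's proof leaves implicit, so your write-up is if anything slightly more complete.
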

 
 \begin{proof}
 Since $\norm{\cdot}^\dagger = \normkfk{\cdot}^{\dagger\dagger} = \normkfk{\cdot}$, the goal is to reach $\<\mM^t, \mD> = \normkfk{\mM^t}$ in (\ref{eq:our_update}). Note that $\mD = \sum_{i=1}^{k} \vu_i \vv_i^\top$ attains this value. Indeed,
 \[
 \<\mM^t, \mD> =\<\mU \mSigma \mV^\top, \sum_{i=1}^{k} \vu_i \vv_i^\top> = \sum_{i,j=1}^{r, k}\<\vu_i \sigma_i \vv_i^\top, \vu_j \vv_j^\top> = \sum_{i=1}^{k}\sigma_i = \normkfk{\mM^t}\,,
 \]
 which completes the proof.\qed
\end{proof}
 
 These updates define the \emph{Fanion} family of LMO-based algorithms, each operating under a norm $\normkfk{\mM^t}^\dagger$. We denote the algorithm for a particular $k$ as \emph{Fanion-$k$}. This family elegantly bridges the rank gap, providing updates of any intermediate rank $k$. For a visualization of the ball in the $\normkfk{\cdot}^\dagger$ norm, see \sectionref{subsec:kyfan2norm}.
 
 \paragraph{Connection to existing algorithms}
 The Fanion family unifies several known algorithms:
 \begin{itemize}
   \item \textbf{Neon} is Fanion-1 (rank-one updates)
   \item \textbf{Muon} is Fanion-$\min\{m,n\}$ (full-rank updates)
   \item \textbf{Dion} (unsharded): The rank-$r$ Dion (Algorithm 1 from \citep{ahn2025dion}) without error feedback and without scaling of the update is actually Fanion-$r$ (see \citep{pethick2025understandingdion}, where Dion is written in a notation more similar to ours)
 \end{itemize}
 
 In \sectionref{sec:matrix-side}, we discuss how to efficiently compute Fanion updates using the Lanczos algorithm.
 
 \section{Conic Combination of LMO-Algorithms Is an LMO-Algorithm}
 \label{sec:conic_combo}
 
 The approaches to designing new matrix LMO-algorithms are not limited to applying norms dual to Ky Fan $k$-norms. We now consider linear combinations of LMO-algorithms and show that these combinations are themselves LMO-algorithms.
 
 \subsection{General Case}
 We begin with a known property of a conic combination of norms (see, e.g., \citep[Table 1]{yu2012arithmetic}).
 
 \begin{lemma}\label{lemma:dual_to_conv_comb}
   Let $\norm{\cdot}_{(1)}, \dots, \norm{\cdot}_{(n)}$ be norms on a finite-dimensional Euclidean space, and let $\alpha_1, \dots, \alpha_n$ be non-negative reals. Define
   \[
     \norm{\cdot} := \sum_{i = 1}^n \alpha_i \norm{\cdot}_{(i)}\,.
   \]
   Then the dual unit ball of $\norm{\cdot}$ satisfies
   \[
     \cB_{\norm{\cdot}^\dagger}
     = \sum_{i = 1}^n \alpha_i \cB_{\norm{\cdot}_{(i)}^\dagger}\,,
   \]
   where $\sum$ denotes the Minkowski sum and $\cB_{\norm{\cdot}_{(i)}^\dagger}$ is the unit ball of the dual norm $\norm{\cdot}_{(i)}^\dagger$.
 \end{lemma}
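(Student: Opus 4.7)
My plan is to reduce both sides of the claimed equality to a common description via support functions. Recall that a norm coincides with the support function of its dual unit ball: by the bipolar theorem (or, equivalently, by applying the definition of dual norm twice and using that closed, symmetric, convex sets with $0$ in their interior are their own bipolars), for any $x$ one has
\[
 \norm{x} \;=\; \sup_{y \in \cB_{\norm{\cdot}^\dagger}} \<x,y>\,.
\]
Consequently, two closed, convex, symmetric sets with nonempty interior are equal iff their support functions agree, so the task reduces to computing the support functions of the left- and right-hand sides.

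The support function of $\cB_{\norm{\cdot}^\dagger}$ is by the above identity equal to $\norm{\cdot}=\sum_i \alpha_i\norm{\cdot}_{(i)}$. On the other side, using the two elementary rules that support functions scale positively with the set ($h_{\alpha A}=\alpha h_A$ for $\alpha\ge 0$) and are additive under Minkowski sums ($h_{A+B}=h_A+h_B$), the support function of $\sum_i \alpha_i \cB_{\norm{\cdot}_{(i)}^\dagger}$ equals $\sum_i \alpha_i h_{\cB_{\norm{\cdot}_{(i)}^\dagger}}=\sum_i \alpha_i\norm{\cdot}_{(i)}$. The two support functions coincide, so the sets are equal, provided the uniqueness theorem applies.

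The step requiring care is verifying the hypotheses of the uniqueness theorem for $C:=\sum_i \alpha_i \cB_{\norm{\cdot}_{(i)}^\dagger}$. Each $\cB_{\norm{\cdot}_{(i)}^\dagger}$ is compact, convex, and symmetric (in finite dimension), and a finite Minkowski sum of such sets inherits all three properties, hence $C$ is closed, convex, symmetric, with nonempty interior whenever at least one $\alpha_i>0$. This is the only nontrivial point; everything else is bookkeeping.

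As a sanity check, one direction admits an elementary direct proof: any $y=\sum_i \alpha_i y_i$ with $\norm{y_i}_{(i)}^\dagger\le 1$ satisfies $\<y,x>\le \sum_i \alpha_i \norm{y_i}_{(i)}^\dagger\norm{x}_{(i)}\le \norm{x}$ for all $x$, so $y\in \cB_{\norm{\cdot}^\dagger}$. The reverse inclusion — decomposing an arbitrary $y$ with $\norm{y}^\dagger\le 1$ as such a convex-like combination — is the genuinely hard part if attempted directly, and is precisely what the support-function argument avoids.
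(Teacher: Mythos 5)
Your proof is correct, and it takes a route that is Fenchel-dual to the paper's rather than identical to it. The paper conjugates the norms themselves: it uses the fact that $(\lambda\norm{\cdot})^*$ is the indicator of the scaled dual ball, applies the sum rule $(f+g)^* = \inf_{u+v=y}(f^*(u)+g^*(v))$ to identify $\norm{\cdot}^*$ as the indicator of a Minkowski sum of dual balls, and then extends from $n=2$ to general $n$ by induction. You instead work with support functions of the dual balls: biduality gives $h_{\cB_{\norm{\cdot}^\dagger}} = \norm{\cdot}$, additivity and positive homogeneity of support functions under Minkowski sums give $h_{\sum_i \alpha_i \cB_{\norm{\cdot}_{(i)}^\dagger}} = \sum_i \alpha_i \norm{\cdot}_{(i)}$, and the injectivity of the support function on closed convex sets finishes the argument for all $n$ at once, with no induction. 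The two arguments are mirror images (the support function of a set is the conjugate of its indicator), but each places the burden of rigor in a different spot: the paper's version implicitly relies on the exactness of the infimal-convolution formula for conjugates of sums (valid here because norms are everywhere finite and continuous, though the paper does not remark on it), while yours relies on the closedness of the Minkowski sum and the uniqueness theorem for support functions --- precisely the point you identify and verify via compactness of the dual balls in finite dimension. Your closing observation, that the inclusion $\sum_i \alpha_i \cB_{\norm{\cdot}_{(i)}^\dagger} \subseteq \cB_{\norm{\cdot}^\dagger}$ is elementary while the reverse decomposition is what the duality machinery buys, is accurate and is not made explicit in the paper. One cosmetic remark: neither symmetry nor nonempty interior is needed for the uniqueness step (closed and convex suffices), and the degenerate case where every $\alpha_i = 0$ is excluded implicitly by the requirement that $\norm{\cdot}$ be a norm, an edge case the paper also leaves unaddressed.
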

 
 A proof is provided in the appendix (see \sectionref{sec:proof_dual_conv_comb}).
 
 \begin{lemma}\label{lemma:lin_comb_lmo}
  Let $\norm{\cdot}_{(1)}, \dots, \norm{\cdot}_{(n)}$ be norms on a finite-dimensional Euclidean space, and let $\alpha_1, \dots, \alpha_n$ be non-negative reals. Consider Linear Minimization Oracles $\mathrm{LMO}_{\norm{\cdot}_{(1)}}, \dots, \mathrm{LMO}_{\norm{\cdot}_{(n)}}$, which correspond to the unit balls of these norms. Then, $\sum_{i = 1}^n \alpha_i \mathrm{LMO}_{\norm{\cdot}_{(i)}}$ is the LMO corresponding to the norm $\norm{\cdot}$ dual to the norm given by $\sum_{i = 1}^n \alpha_i \norm{\cdot}_{(i)}^\dagger$.
\end{lemma}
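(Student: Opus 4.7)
The plan is to reduce everything to the Minkowski-sum description from Lemma~\ref{lemma:dual_to_conv_comb} and then use that a linear functional separates over a Minkowski sum. Write $\norm{\cdot}$ for the norm dual to $\sum_i \alpha_i \norm{\cdot}_{(i)}^\dagger$. To describe the LMO for $\norm{\cdot}$ we first need an explicit description of its unit ball $\cB_{\norm{\cdot}}$.

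First I would apply Lemma~\ref{lemma:dual_to_conv_comb} with the input norms $\norm{\cdot}_{(i)}$ replaced by their duals $\norm{\cdot}_{(i)}^\dagger$. Invoking reflexivity of norms on a finite-dimensional Euclidean space ($\norm{\cdot}_{(i)}^{\dagger\dagger} = \norm{\cdot}_{(i)}$), this yields
\[
\cB_{\norm{\cdot}} \;=\; \cB_{\bigl(\sum_i \alpha_i \norm{\cdot}_{(i)}^\dagger\bigr)^\dagger} \;=\; \sum_{i=1}^n \alpha_i\, \cB_{\norm{\cdot}_{(i)}}\,,
\]
where the sum on the right is the Minkowski sum.

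Next I would expand the LMO corresponding to $\norm{\cdot}$. Any $\mD \in \cB_{\norm{\cdot}}$ admits a decomposition $\mD = \sum_i \alpha_i \mD_i$ with $\mD_i \in \cB_{\norm{\cdot}_{(i)}}$, and by bilinearity
\[
\<\mM, \mD> \;=\; \sum_{i=1}^n \alpha_i \<\mM, \mD_i>\,.
\]
Since the constraints on the $\mD_i$ are independent (each lies in its own unit ball) and $\alpha_i \geq 0$, the minimization separates: a joint minimizer is obtained by choosing each $\mD_i \in \argmin_{\mD' \in \cB_{\norm{\cdot}_{(i)}}} \<\mM, \mD'> = \mathrm{LMO}_i(\mM)$ independently. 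Summing,
\[
\mathrm{LMO}(\mM) \;\in\; \sum_{i=1}^n \alpha_i \,\mathrm{LMO}_i(\mM)\,,
\]
which is the claim.

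The only real subtlety is that Lemma~\ref{lemma:dual_to_conv_comb} is stated in one direction (dual of a sum of norms equals the Minkowski sum of dual balls), and I need it in the ``reverse'' direction (ball of the dual of a sum of dual norms). The main obstacle is thus making the application of reflexivity airtight; once that is in place, the separability argument over the Minkowski sum is immediate and the rest is routine.
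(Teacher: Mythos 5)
Your proposal is correct and follows essentially the same route as the paper's own proof: both apply Lemma~\ref{lemma:dual_to_conv_comb} to the dual norms together with biduality to obtain $\cB_{\norm{\cdot}} = \sum_{i=1}^n \alpha_i \cB_{\norm{\cdot}_{(i)}}$, and then separate the linear minimization over the Minkowski sum. The reflexivity step you flag as the ``only real subtlety'' is exactly the biduality property the paper invokes, so there is no gap.
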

 
 \begin{proof}
   Using \lemmaref{lemma:dual_to_conv_comb} and the biduality property $\norm{\cdot}^{\dagger \dagger} = \norm{\cdot}$, we obtain the unit ball representation: $\cB_{\norm{\cdot}} = \sum_{i = 1}^n \alpha_i \cB_{\norm{\cdot}_{(i)}}$. The linear minimization problem over this ball can thus be transformed as follows:
   \[
   \argmin_{\mD \in \cB_{\norm{\cdot}}}\<\mM, \mD> = \argmin_{\mD_1 \in \alpha_1 \cB_{\norm{\cdot}_{(1)}}, \dots, \mD_n \in \alpha_n\cB_{\norm{\cdot}_{(n)}}}\<\mM, \sum_{i = 1}^n\mD_i> = \sum_{i = 1}^n \argmin_{D_i \in \cB_{\norm{\cdot}_{(i)}}}\<\mM, \mD_i>\,,
   \]
   where the last summation denotes the Minkowski sum. This immediately implies 
   \[
   \sum_{i = 1}^n \alpha_i \mathrm{LMO}_{i} \in \argmin_{\mD \in \cB_{\norm{\cdot}}}\<\mM, \mD>\,,
   \] 
   completing the proof.\qed
\end{proof}
 
 Applying this result to optimization algorithms yields the following corollary.
 
 \begin{corollary}\label{corollary:lin_comb_alg}
  Let there be a finite family of LMO-based algorithms indexed by $i = 1, \dots, n$, where the $\mX^{t + 1}$ update of the $i$-th algorithm is defined by
  \[
  \mX^{t + 1} - \mX^{t} = \gamma_t \mathrm{LMO}_{\norm{\cdot}_{(i)}}(\mM^{t})\,,
  \]
  and $\mathrm{LMO}_{\norm{\cdot}_{(i)}}$ corresponds to the unit ball of norm $\norm{\cdot}_{(i)}$. Then, for arbitrary non-negative $\alpha_1, \dots, \alpha_n$, the algorithm with the update given by
  \[
  \mX^{t + 1} - \mX^{t} = \gamma_t \sum_{i = 1}^n \alpha_i \mathrm{LMO}_{\norm{\cdot}_{(i)}}(\mM^t)
  \]
  is an LMO-algorithm itself, with LMO corresponding to the unit ball of the norm $\norm{\cdot}$ dual to the norm given by $\sum_{i = 1}^n \alpha_i \norm{\cdot}_{(i)}^\dagger$.
\end{corollary}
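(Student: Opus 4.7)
The plan is to obtain \corollaryref{corollary:lin_comb_alg} as an almost immediate consequence of \lemmaref{lemma:lin_comb_lmo}, since the only new ingredient is wrapping the LMO identity inside the iterative update rule. I would first observe that the combined update prescribed in the statement,
\[
\mX^{t+1} - \mX^{t} = \gamma_t \sum_{i=1}^{n}\alpha_i\,\mathrm{LMO}_{i}(\mM^{t})\,,
\]
involves exactly the operator $\sum_{i=1}^{n}\alpha_i\,\mathrm{LMO}_{i}$ that \lemmaref{lemma:lin_comb_lmo} characterizes. Thus my first step is simply to cite that lemma and conclude that this operator is a valid selection from $\argmin_{\mD\in\cB_{\norm{\cdot}}}\<\mM^t,\mD>$, where $\norm{\cdot}$ is the norm dual to $\sum_{i=1}^{n}\alpha_i \norm{\cdot}_{(i)}^\dagger$.

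My second step is to substitute this into the generic LMO-algorithm template of \equationref{eq:simple_update}: replacing $\mathrm{LMO}(\mM^t)$ there by the combined operator produces precisely the update of the combined algorithm, which shows it matches the definition of an LMO-algorithm with respect to the norm $\norm{\cdot}$. This two-line substitution is really the whole content of the corollary.

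The only subtlety to address is that an LMO is generally set-valued rather than a function, so strictly speaking \lemmaref{lemma:lin_comb_lmo} guarantees an inclusion $\sum_i \alpha_i \mathrm{LMO}_i \subseteq \argmin_{\mD\in\cB_{\norm{\cdot}}}\<\mM,\mD>$ rather than equality of mappings. I would therefore state the corollary as saying that the combined update is \emph{a valid} LMO-algorithm update for $\norm{\cdot}$, i.e., each selection on the right-hand side of the combined update belongs to the argmin set defining the LMO for $\norm{\cdot}$. This is where the main (minor) care lies, and it is easily handled by quoting the argument already given in the proof of \lemmaref{lemma:lin_comb_lmo} that the Minkowski-sum decomposition of $\cB_{\norm{\cdot}}$ induces a componentwise decomposition of the linear minimization problem.

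I do not anticipate a serious obstacle: the corollary is a direct corollary in the literal sense, obtained by inserting the lemma's conclusion into the iteration. The only work is bookkeeping about set-valuedness and making explicit that $\norm{\cdot}^{\dagger\dagger}=\norm{\cdot}$ is used to identify the norm whose unit ball hosts the linear minimization.
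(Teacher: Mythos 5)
Your proposal matches the paper exactly: the paper states the corollary as an immediate application of \lemmaref{lemma:lin_comb_lmo} ("Applying this result to optimization algorithms yields the following corollary") without further argument, and your substitution of the combined operator into the update template of \equationref{eq:simple_update} is precisely that step. Your extra care about the set-valuedness of the LMO is a correct and welcome refinement, not a deviation.
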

 
 \subsection{Frobeniusize Them: F-Muon and F-Neon}
 We now construct concrete examples of algorithms obtained via linear combinations of LMO-algorithms. By \corollaryref{corollary:lin_comb_alg}, these combinations are themselves LMO-algorithms.
 
 Combining Fanions with NSGD yields a family of algorithms with updates
 \begin{equation}\label{eq:nsgd_muon_update}
     \mX^{t+1} = \mX^{t} - \gamma_t \left(\alpha \sum_{i = 1}^k \vu_i \vv_i^\top + (1-\alpha)\frac{\mM^t}{\normf{\mM^t}}\right)\,.
 \end{equation}
 Recall that Fanion-$k$ operates under the dual to the Ky Fan $k$-norm, while NSGD operates under the self-dual Frobenius norm. By \corollaryref{corollary:lin_comb_alg}, this combination defines an LMO-algorithm with norm $\norm{\cdot}_{\mathrm{F-KF-k}}^\dagger$, where
 \begin{equation}\label{eq:fkfk_norm}
   \norm{\cdot}_{\mathrm{F-KF-k}} = \alpha \normkfk{\cdot} + (1-\alpha)\normf{\cdot}\,.
 \end{equation}
 We call this family \emph{F-Fanions}.
 
 The extreme members of this family are \emph{F-Neon} (with $k = 1$) and \emph{F-Muon} (with $k = \min\{m, n\}$). Additional information and visualizations for the F-Muon generating $\normfstar{\cdot}^\dagger = \norm{\cdot}_{\mathrm{F-KF-\min\{m, n\}}}^\dagger$ and the F-Neon generating $\normftwo{\cdot}^\dagger = \norm{\cdot}_{\mathrm{F-KF-1}}^\dagger$ norms appear in the appendix (see (\ref{eq:normfstardual}), (\ref{eq:normftwodual}), \figureref{fig:fduals}).
 
 \subsection{Add SignSGD: S-Muon and S-Neon}
 
 Similarly, combining Fanion-$k$ with SignSGD yields \emph{S-Fanion-$k$} with the update
 \begin{equation}\label{eq:sign_muon_update}
     \mX^{t+1} = \mX^{t} - \gamma_t \left(\alpha \sum_{i = 1}^k \vu_i \vv_i^\top + (1-\alpha)\eta\sign(\mM^t)\right)\,,
 \end{equation}
 where \texttt{sign\_lr\_coeff} $\eta$ is a scaling coefficient for SignSGD.
 
 By \corollaryref{corollary:lin_comb_alg}, this defines an LMO-algorithm with norm $\norm{\cdot}_{\mathrm{C^\dagger-KF-k}}^\dagger$, where
 \begin{equation}\label{eq:ckfk_norm}
   \norm{\cdot}_{\mathrm{C^\dagger-KF-k}} = \alpha \normkfk{\cdot} + \frac{1-\alpha}{\eta}\normc{\cdot}^\dagger\,.
 \end{equation}
 
 The extreme members of this family are \emph{S-Neon} (with $k = 1$) and \emph{S-Muon} (with $k = \min\{m, n\}$).
 
 \section{Computing the Updates}\label{sec:matrix-side}
 We employ the thick-restart Lanczos method for the symmetric eigenvalue problem (TRLan) to compute the low-rank updates of Fanions. We apply TRLan to either $\mM^{t\top}\mM^t$ or $\mM^t \mM^{t\top}$, selecting whichever matrix is smaller. We use the CuPy implementation of \texttt{cupy.sparse.linalg.svds} \citep{cupy_svds_ref}, which internally relies on TRLan \citep{simonz1998thick}.
 
 TRLan is specifically designed for efficiently approximating the largest singular values and corresponding singular vectors of very large matrices. The thick-restart strategy retains the most informative Ritz vectors across restarts, which dramatically accelerates convergence while maintaining moderate memory consumption. TRLan is particularly well-suited to our GPU setting because its dominant computational cost consists of a modest number of highly parallelizable matrix-vector multiplications (matvecs), and it avoids full reorthogonalization against the entire Krylov basis by employing short recurrence relations combined with thick restarting.
 
 The per-cycle complexity is $\mathcal{O}(mn^2 + n^2d + nd^2)$, where $m \geq n$ are the dimensions of the target matrix and $d$ is the size of the retained subspace ($d \ll n$ typically).
 
 In \tableref{tbl:matrix_methods}, we compare TRLan against randomized SVD (RSVD) and simple power iterations for computing the rank-$k$ update used in Fanion-$k$ and related algorithms. Experiments are performed on dense random matrices with i.i.d. $\mathcal{N}(0,1)$ entries using CPU implementations for fair comparison.
 We report:
 \begin{itemize}
   \item $err_1$: relative error in the Frobenius norm of $\sum_i^k \vu_i \vv_i^T$,
   \item $err_2$: relative error in the Frobenius norm of $\sum_i^k \sigma_i \vu_i \vv_i^T$.
 \end{itemize}
 
 On $500\times500$ matrices, TRLan and RSVD require comparable wall-clock time, but TRLan delivers orders-of-magnitude lower error with far fewer matvecs. On larger $5000\times5000$ matrices, this advantage becomes even more pronounced: TRLan is 3-4 times faster than RSVD while using $\sim$30 times fewer matvecs at comparable or superior accuracy.
 
 An interesting empirical observation is that RSVD tends to approximate the \emph{singular values} themselves reasonably well, but the reconstructed low-rank matrix exhibits noticeable deviation from the truncated SVD. In contrast, TRLan provides an excellent approximation to the truncated SVD matrix itself (low $err_2$), albeit at the cost of occasionally less accurate individual singular values. This makes TRLan the preferred choice for algorithms like Neon/Fanion-$k$ that only require the low-rank term $\sum \sigma_i \vu_i \vv_i^\top$, but less suitable for methods (e.g., Dion) that explicitly require accurate $\sigma_i$ for error feedback or step-size control.
 
 A current practical limitation is the absence of a native PyTorch implementation of thick-restart Lanczos; existing PyTorch-based randomized SVD routines cannot match TRLan's accuracy-efficiency combination for the matrix reconstruction task.
 
 For reference, \tableref{tbl:newton_schulz_reference} presents results for the Newton-Schulz polar decomposition iteration on the same matrices, where $err_1$ is the relative error of $\mU\mV^T$ (29-30 iterations to converge, resulting in a significantly higher matvec count than TRLan).
 
 \begin{table}[ht]
 
\caption{Comparison of methods for computing rank-$k$ updates on dense random matrices (CPU, double precision). Lower is better in all columns.}\label{tbl:matrix_methods}
\resizebox{\textwidth}{!}{%
     \begin{tabular}{cccccccc}
       \arrayrulecolor{black}\toprule
       Matrix sizes & $k$  & Method           & Time (s) & Matvecs & Iterations & $err_1$         & $err_2$       \\
       \arrayrulecolor{black}\hline
       $500 \times 500$      & 5  & Power Iterations           & 0.062   & 2005    & 200        & $9.2 \cdot 10^{-3}$ & $9.1 \cdot 10^{-3}$\\
       $500 \times 500$      & 5  & RSVD             & 0.017   & 1170    & 38         & $9.8 \cdot 10^{-3}$ & $9.6 \cdot 10^{-3}$\\
       $500 \times 500$      & 5  & TRLan            & 0.018   & 131     & 65         & $9.6 \cdot 10^{-5}$ & $9.4 \cdot 10^{-5}$\\
       \hdashline
       $500 \times 500$      & 50 & Power Iterations           & 0.44    & 43750   & 437      & $9.9 \cdot 10^{-3}$ & $9.0 \cdot 10^{-3}$\\
       $500 \times 500$      & 50 & RSVD             & 0.61    & 6120    & 50      & $9.9 \cdot 10^{-3}$ & $9.1 \cdot 10^{-3}$\\
       $500 \times 500$      & 50 & TRLan            & 0.16    & 462     & 231      & $3.3 \cdot 10^{-7}$ & $3.0 \cdot 10^{-7}$\\
       \hdashline
       $5000 \times 5000$    & 5  & Power Iterations           & 9.6     & 9065    & 906      & $8.6 \cdot 10^{-3}$ & $8.6 \cdot 10^{-3}$\\
       $5000 \times 5000$    & 5  & RSVD             & 2.1     & 5640    & 187      & $9.7 \cdot 10^{-3}$ & $9.7 \cdot 10^{-3}$\\
       $5000 \times 5000$    & 5  & TRLan            & 0.70    & 205     & 102      & $7.7 \cdot 10^{-3}$ & $7.7 \cdot 10^{-3}$\\
       \arrayrulecolor{black}\bottomrule
     \end{tabular}%
   }

 \end{table}
 
 \begin{table}[ht]
 
\caption{Newton-Schulz iterations performance on random dense matrices (for reference).}\label{tbl:newton_schulz_reference}
     \begin{tabular}{ccccc}
       \arrayrulecolor{black}\toprule
       Matrix size    & Time (s) & Matvecs & Iterations & $err_1$ \\
       \arrayrulecolor{black}\hline
       $500 \times 500$     & 0.041    & 27 000  & 27         & $4.8 \cdot 10^{-3}$  \\
       $5000 \times 5000$   & 26.4     & 290 000 & 29         & $6.5 \cdot 10^{-3}$  \\
       \arrayrulecolor{black}\bottomrule
     \end{tabular}%

 \end{table}
 
 \section{Experiments}\label{sec:experiments}
 
 \subsection{A Smooth Convex Problem}
 \label{subsec:lls_exps}
 
 We begin by evaluating F-Fanions and S-Fanions on the following $L$-smooth convex problem:
 \begin{equation}\label{eq:lls}
   F(\mX) = \frac{1}{2}\normf{\mM^{1/2}\mX\mN^{1/2}}^2 = \frac{1}{2} \<\mX, \mM \mX \mN> \to \min_{\mX \in \Rmn}\,,
 \end{equation}
 where $\mX \in \Rmn$ with $m=500$ and $n=500$, corresponding to the typical dimensions of a neural network weight matrix, while $\mM \in \mathbb{S}^m_{+}$ and $\mN \in \mathbb{S}^n_{+}$ are positive-semidefinite matrices. The spectra of $\mM$ and $\mN$ are uniformly distributed on the interval $(0, 1)$, so $L \le 1$ for the Frobenius norm. We initialize $\mX^0$ with entries drawn independently from $\cN(0, 0.01)$.
 
 We evaluate several algorithms from the Fanion family and their convex combinations, using the update rule from (\ref{eq:simple_update}) with approximate Nesterov momentum:
 \begin{itemize}
   \item \textbf{Fanions:} Neon (Fanion-1), Fanion-2, Fanion-10, Fanion-100, and Muon (Fanion-500).
   \item \textbf{F-Fanions:} F-Neon (F-Fanion-1), F-Fanion-2, F-Fanion-10, F-Fanion-100, and F-Muon (F-Fanion-500) with $\alpha=1/2$.
   \item \textbf{S-Fanions:} S-Neon (S-Fanion-1), S-Fanion-2, S-Fanion-10, S-Fanion-100, and S-Muon (S-Fanion-500) with $\alpha=1/2$ and \texttt{sign\_lr\_coeff=0.01}.
 \end{itemize}
 We also include the baselines of Normalized SGD and SignSGD, which correspond to F-Fanion and S-Fanion, respectively, with $\alpha = 0$ and arbitrary $k$.
 
 Since theoretical bounds \citep{kovalev2025understanding, riabinin2025gluon} rely on a loose norm bound $\norm{\cdot}\le \rho \normf{\cdot}$, we do not derive the learning rate or Nesterov momentum from the smoothness constants, which also depend on the choice of the norm. Instead, we identify the \texttt{(lr, momentum)} pair for which the optimizer reaches the loss threshold of $0.001$ in the fewest iterations. This setting is both realistic and consistent with the corollaries of convergence theorems that propose constant learning rate and momentum coefficients.
 
 We perform a grid search over the following hyperparameter ranges:
 \begin{itemize}
   \item \texttt{momentum} $\in \{0.1, 0.2, 0.3, 0.4, 0.5, 0.6, 0.7, 0.8, 0.9, 0.95\}$ for all algorithms,
   \item \texttt{lr} $\in [0.005, 0.020]$ with step $0.001$ for Muon, F-Muon, and S-Muon,
   \item \texttt{lr} $\in [5 \cdot 10^{-5},\, 2 \cdot 10^{-4}]$ with step $1 \cdot 10^{-5}$ for SignSGD,
   \item \texttt{lr} $\in [0.01, 0.10]$ with step $0.01$ for NSGD.
 \end{itemize}
 The tuned parameters and the number of iterations required to converge to $0.001$ are presented in \tableref{tbl:lls_lrs}. For intermediate-rank Fanions, F-Fanions, and S-Fanions, the hyperparameters \texttt{lr}, \texttt{momentum}, and \texttt{sign\_lr\_coeff} are set equal to those of Muon, F-Muon, and S-Muon, respectively, as their losses decrease too slowly to reach $0.001$ within a reasonable tuning budget.
 
 \begin{table}[htbp]
 
\caption{Tuned learning rates and momentum coefficients in the experiments on the smooth convex problem (\ref{eq:lls}).}\label{tbl:lls_lrs}
     \begin{tabular}{cccc}
       \arrayrulecolor{black}\toprule
       Algorithm & \texttt{lr}  & \texttt{momentum}  & Iterations to 0.001 loss \\
       \arrayrulecolor{black}\hline
       Muon & 0.007 & 0.5 & 1060 \\
       \hdashline
       NSGD & 0.08 & 0.95 & 1020 \\
       F-Muon & 0.015 & 0.7 & 910 \\
       \hdashline
       SignSGD & 0.016 $\times$ 0.01 & 0.95 & 2650 \\
       S-Muon & 0.011 & 0.9 & 890 \\
       \arrayrulecolor{black}\bottomrule
     \end{tabular}%

 \end{table}
 
 The results are presented in \figureref{fig:lls} with additional details in \sectionref{sec:lls_plots_section}.
 
 \begin{figure}[htbp]

     \subfigure[Loss]{\label{fig:lls_loss}
       \includegraphics[width=0.8\linewidth]{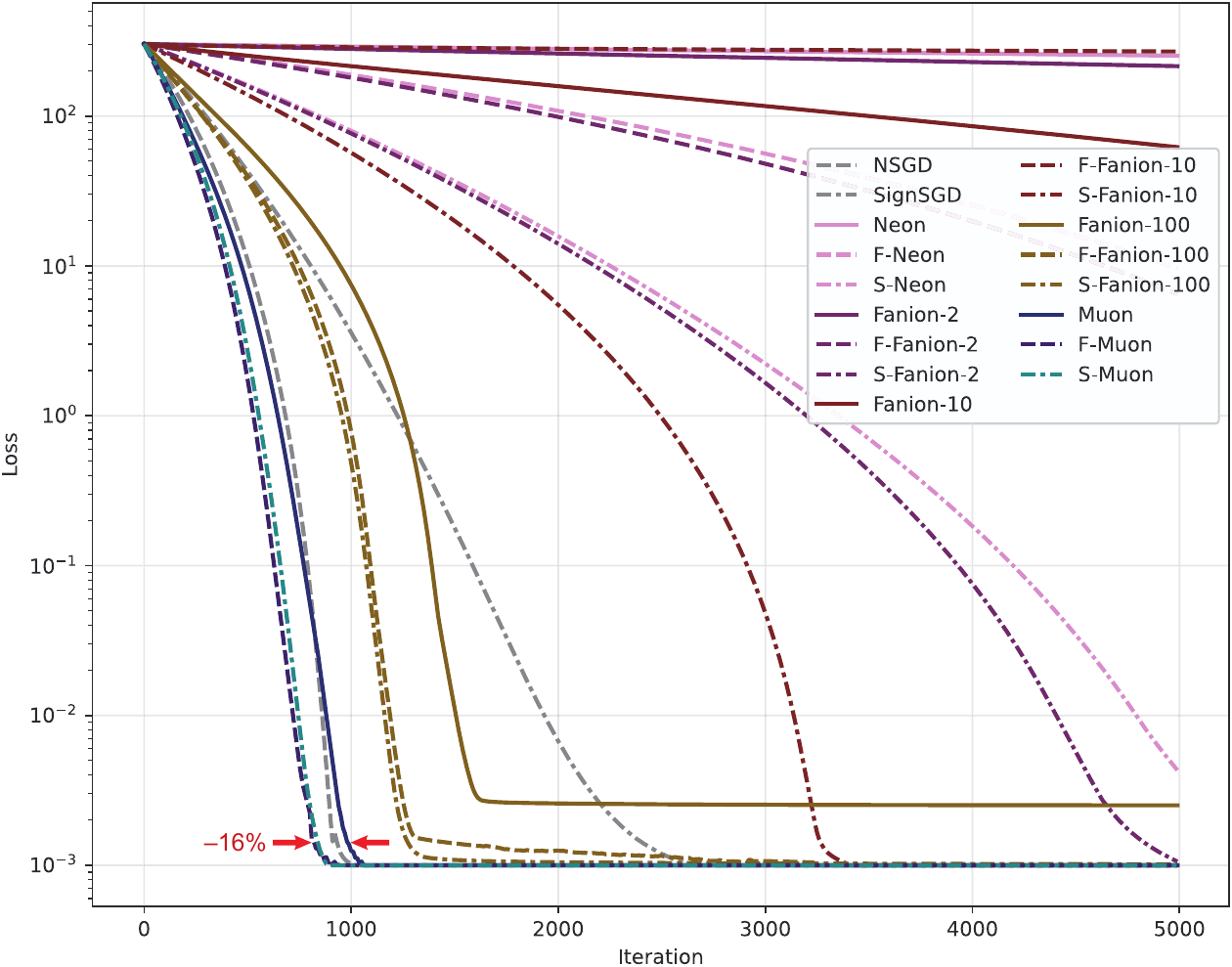}}%
     \vspace{1em}
     \subfigure[Frobenius norm of the full gradient]{\label{fig:lls_fro_grad_norm}%
       \includegraphics[width=0.8\linewidth]{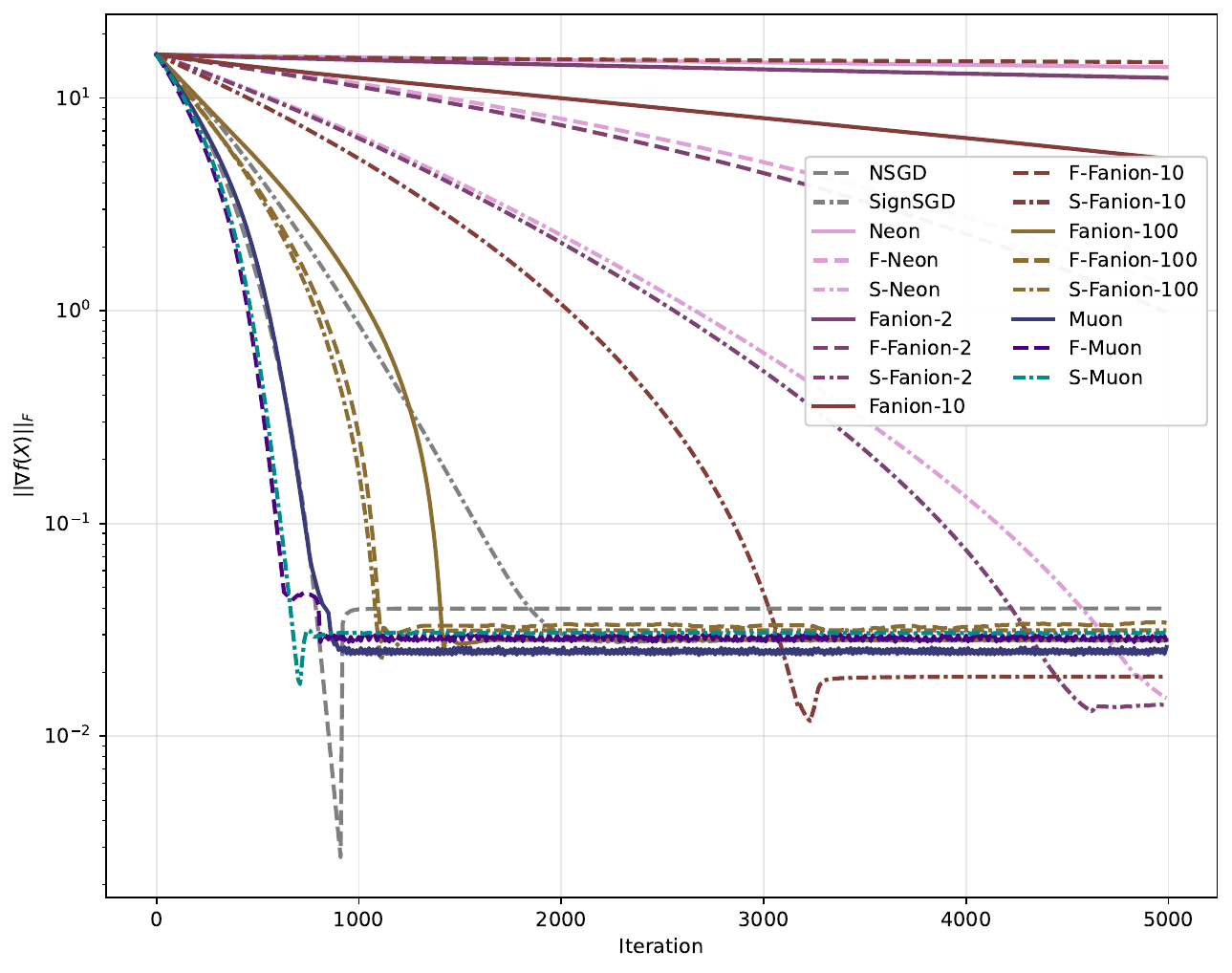}}
   
\caption{A smooth convex problem (\ref{eq:lls}) for a 500x500 matrix.}\label{fig:lls}

 \end{figure}
 
 Both F-Muon and S-Muon converge faster to lower loss values and achieve lower Frobenius norms of the full gradient than NSGD, Muon, or SignSGD.
 
 \subsection{CIFAR-10 Airbench}
 
 We evaluate the algorithms on the CIFAR-10 airbench \citep{cifar2023airbench}. To assess the impact of the mixing parameter $\alpha$ in (\ref{eq:nsgd_muon_update}), we first run F-Muon for different values of $\alpha$ using hyperparameters tuned for vanilla Muon by Keller Jordan: \texttt{lr=0.24(1 - step/total\_steps)}, \texttt{momentum=0.65}, \texttt{nesterov=True} with weight normalization after each weight update. We perform 10 repetitions for each $\alpha$ value and record the accuracy after 8 epochs of training (\figureref{fig:muon_alphas}).
 
 Next, we tune F-Muon specifically with $\alpha = 0.5$, obtaining optimal hyperparameters \texttt{lr=0.4(1 - step/total\_steps)}, \texttt{momentum=0.6}, \texttt{nesterov = True}. Tuned F-Muon achieves $94.02 \pm 0.13\%$ validation accuracy after 8 epochs (averaged over 200 runs), matching Muon's accuracy and variance. We again measure the validation accuracy as a function of $\alpha$ (\figureref{fig:fmuon_alphas}) and observe that even at $\alpha=0.1$, the accuracy substantially exceeds that of vanilla NSGD.
 
 For S-Muon with $\alpha=0.5$, we tune all hyperparameters to find the optimal configuration \texttt{lr=0.42(1 - step/\allowbreak{}total\_steps)}, \texttt{momentum=0.63}, \texttt{nesterov = True}, \texttt{sign\_lr\_coeff = 0.003}, achieving $94.03 \pm 0.13\%$ validation accuracy (\figureref{fig:diff_alphas}), which slightly exceeds vanilla Muon's performance.
 
 \begin{figure}[htbp]

     \subfigure[With parameters tuned for Muon]{\label{fig:muon_alphas}%
       \includegraphics[width=0.48\linewidth]{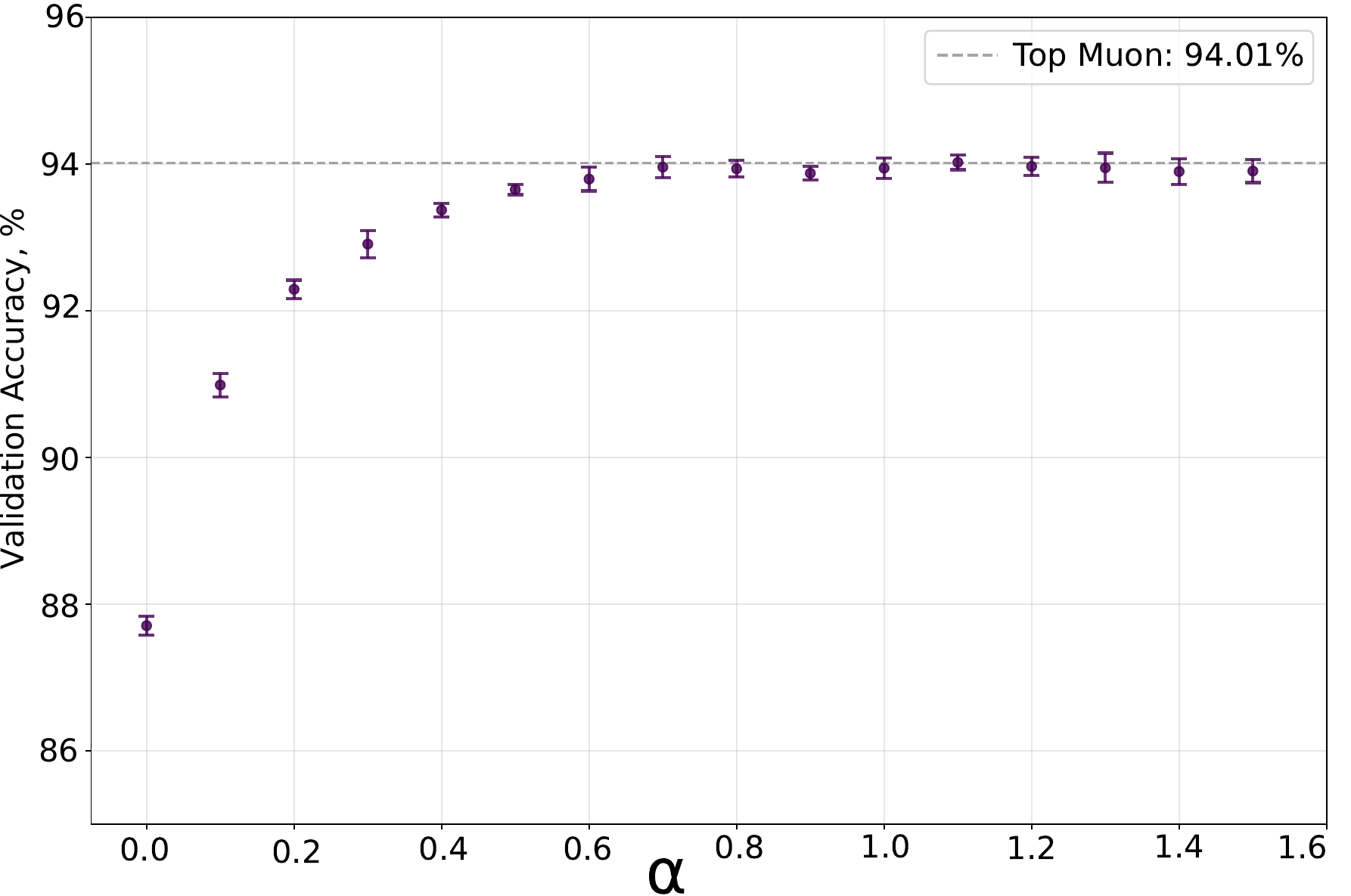}}%
     \hfill
     \subfigure[With parameters tuned for F-Muon]{\label{fig:fmuon_alphas}%
       \includegraphics[width=0.48\linewidth]{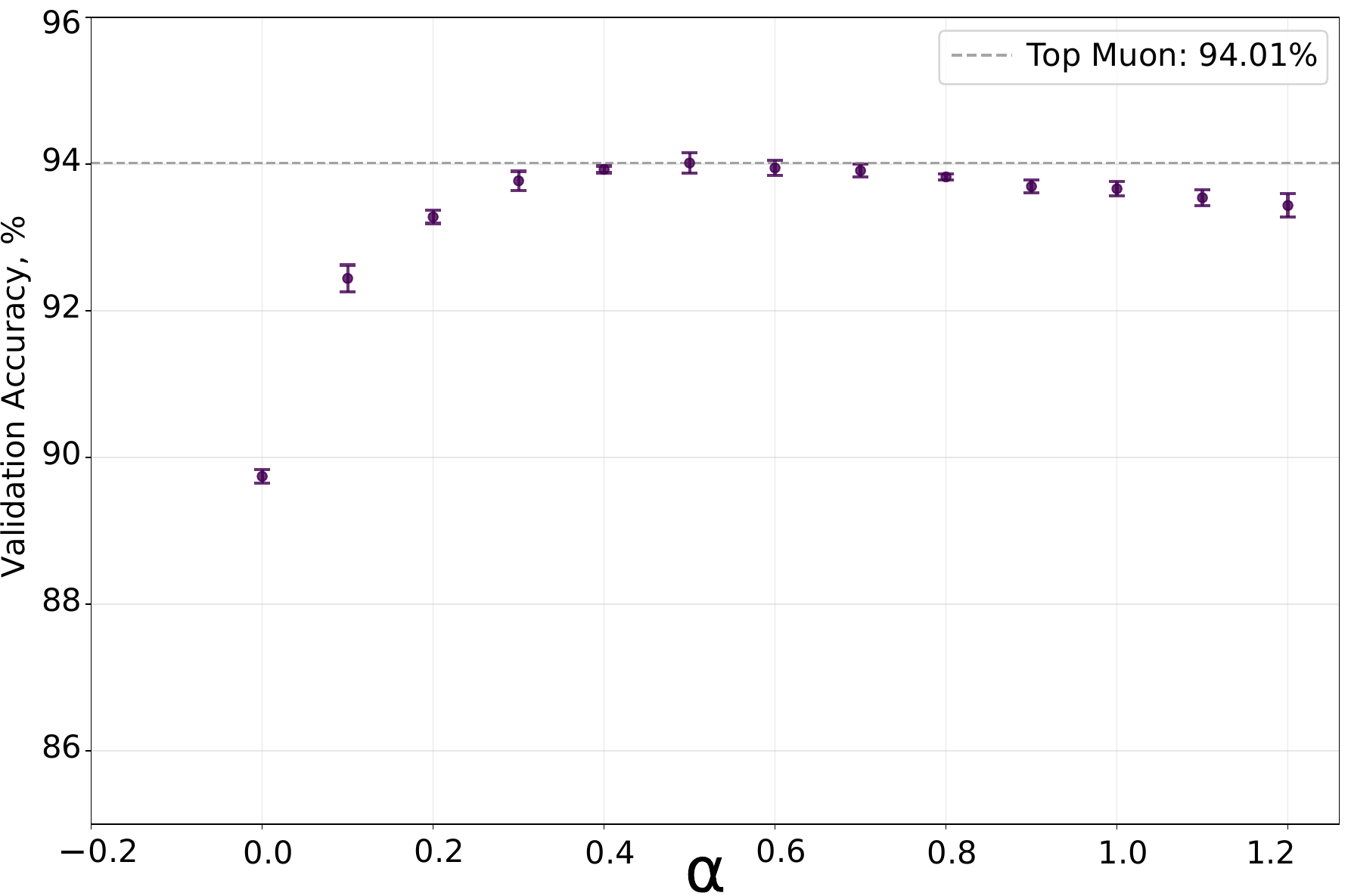}}
   
\caption{Mean validation accuracies for F-Muon with different $\alpha$.}\label{fig:diff_alphas}

 \end{figure}
 
 The Muon-level performance is remarkable given that F-Muon and S-Muon operate with substantially different constraint geometries. \figureref{fig:cifar_ball} visualizes F-Muon by plotting the LMO balls of Muon and F-Muon (scaled by actual learning rates) in the 2D space of singular values (with all other singular values set to zero). S-Muon cannot be visualized in this manner because the Chebyshev norm is not a function of singular values alone, and visualizing $\norms{\cdot}$ for $\Rmn$ with $m, n \geq 2$ requires at least a 4D space. Nevertheless, the LMO ball of S-Muon clearly differs substantially from Muon's, as the effective SignSGD learning rate $\texttt{lr} = (1-\alpha)\texttt{sign\_lr\_coeff} = 6.3 \cdot 10^{-4}$ is comparable to typical SignSGD learning rates in deep learning.
 
 \begin{figure}[htbp]
 
\includegraphics[width=0.6\linewidth]{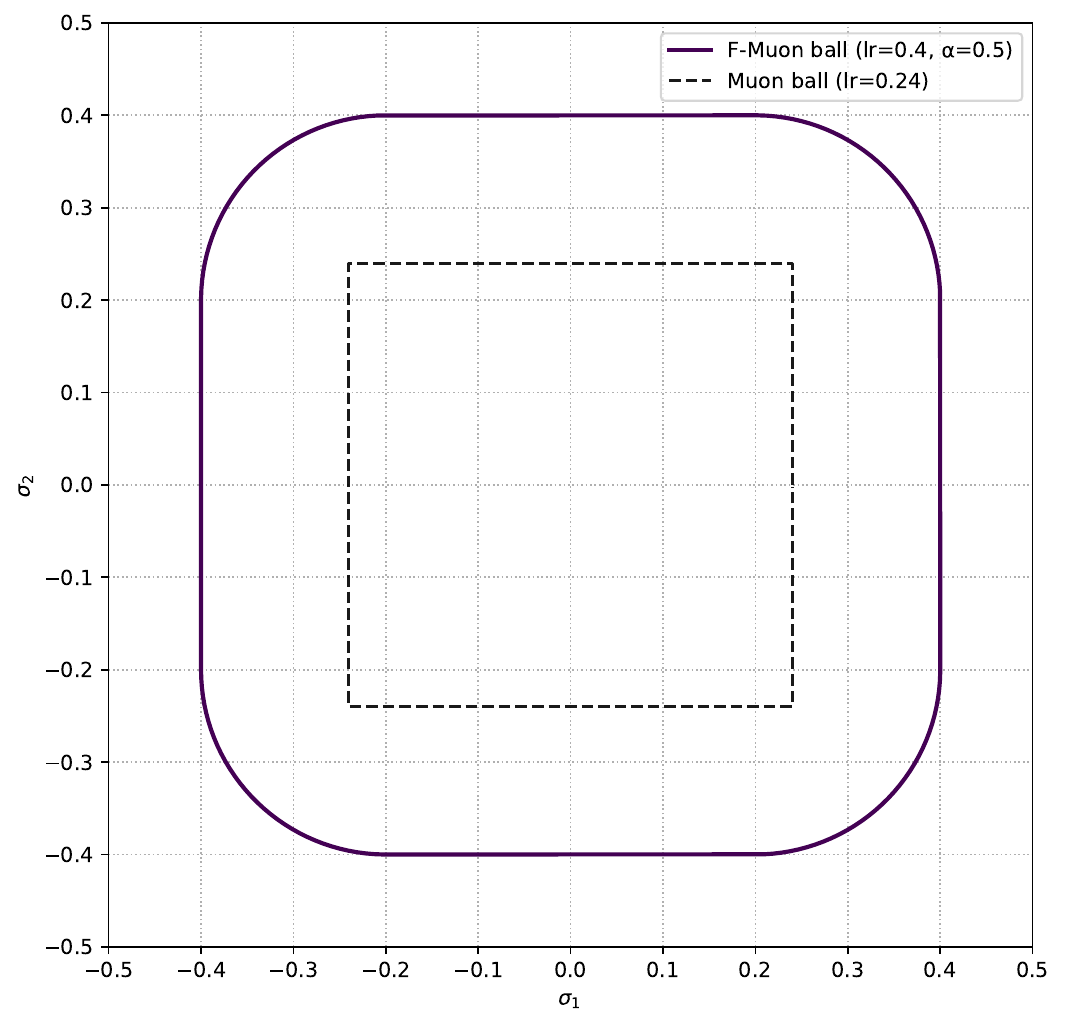}
\caption{The LMO balls of Muon and F-Muon for training a CNN on CIFAR-10.}\label{fig:cifar_ball}

 \end{figure}
 
 Notably, even the pathological case $\alpha > 1$, which corresponds to an LMO constraint region that is not a norm ball, achieves accuracy nearly identical to vanilla Muon. These observations raise a fundamental question about the sensitivity of LMO-based algorithms to the shape of the constraint region.
 
 Additional results, including gradient norm profiling and performance comparisons with Fanions and F-Fanions, are provided in \sectionref{sec:cifar_underhood}.
 
 \subsection{NanoGPT Speedrun}
 
 We evaluate F-Muon and S-Muon with $\alpha=0.5$ against Muon, SignSGD, and NSGD on the NanoGPT speedrun benchmark \citep{modded_nanogpt_2024}. The optimal hyperparameters are shown in the legend of \figureref{fig:nano_gpt_loss}, with \texttt{sign\_sgd\_coeff} values of $3 \cdot 10^{-4}$ for S-Muon. After 1750 training steps, F-Muon achieves a cross-entropy loss of 3.281, S-Muon achieves 3.287, and Muon achieves 3.279, falling below the target threshold of 3.280. As illustrated in \figureref{fig:nano_gpt_loss}, these differences are negligible. Notably, F-Muon, a convex combination of Muon and NSGD, is efficient despite NSGD performing poorly in isolation. The same observation holds for S-Muon and SignSGD.
 
 \begin{figure}[htbp]
 
\includegraphics[width=0.7\linewidth]{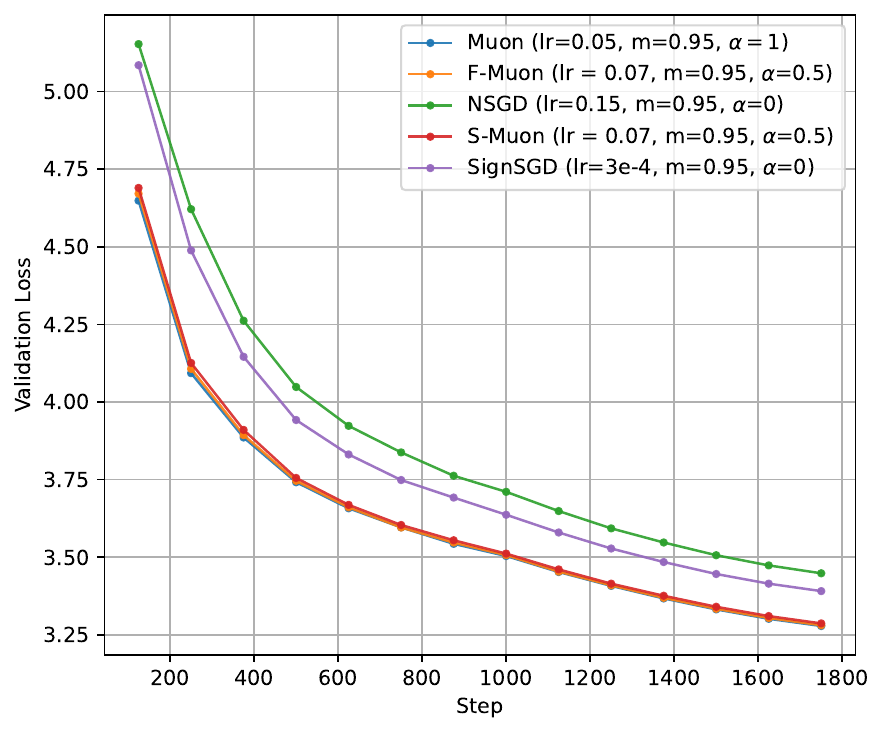}
\caption{Validation loss for NanoGPT.}\label{fig:nano_gpt_loss}

 \end{figure}
 
 As observed in the CIFAR-10 airbench experiments, setting $\alpha=1.1$ for F-Muon (corresponding to the no-ball configuration) yields a loss of 3.2818, representing only a marginal difference from the baseline.
 
 Interestingly, F-Muon with the NSGD component \textit{not scaled} by Muon's $\sqrt{m/n}$ rule yields a loss of 3.288 (an increase of 0.007), while S-Muon \textit{scaled} with this rule achieves a loss of 3.292 (an increase of 0.005). Thus, there is no clear rule as to whether the non-Fanion update part should be scaled by $\sqrt{m/n}$.
 
 \subsection{GPT-2 Medium Speedrun}
 
 We scale from NanoGPT to GPT-2 Medium (24 transformer layers, 1024-dimensional hidden layers, 16 attention heads, approximately 345 million parameters), evaluating the same algorithms with $\alpha=0.5$ on the FineWeb dataset. After 5960 training steps, Muon achieves a validation loss of 2.9198, successfully reaching the speedrun threshold of 2.92. F-Muon achieves 2.9215, and S-Muon achieves 2.9235, narrowly missing the threshold. As illustrated in \figureref{fig:med_gpt_loss}, the performance gap remains remarkably small: F-Muon trails Muon by only 0.0017, while S-Muon trails by 0.0037. These minimal differences demonstrate that alternative norm constraints maintain competitive performance even at this significantly larger model scale.
 
 \begin{figure}[htbp]
   
\includegraphics[width=0.7\linewidth]{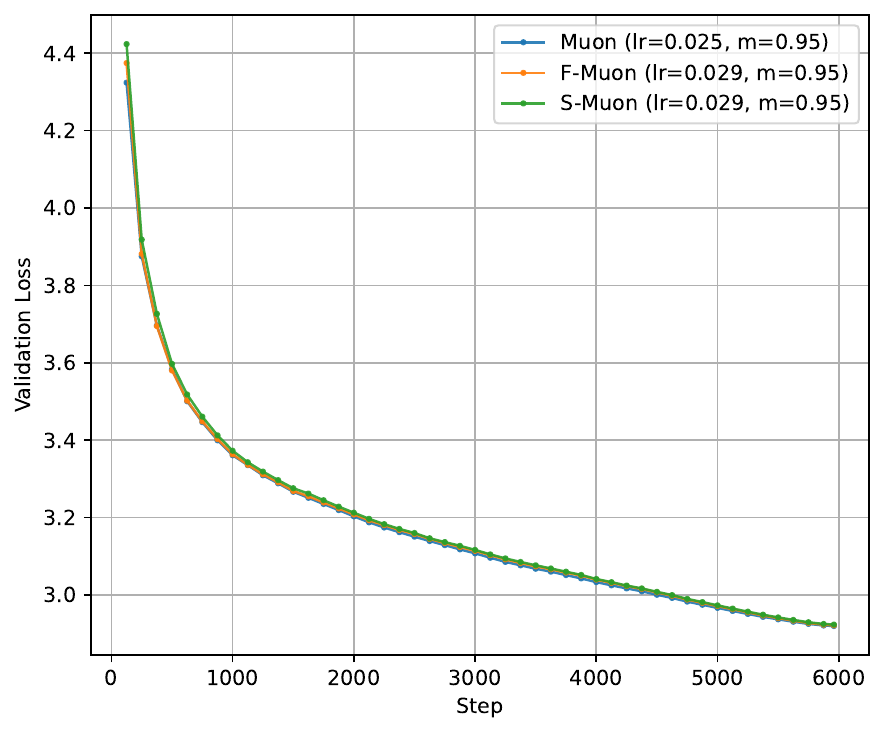}
\caption{Validation loss for GPT-2 Medium.}\label{fig:med_gpt_loss}

   \end{figure}
 
 \subsection{NanoGPT Fine-Tuning}
 \label{subsec:finetune}
 
 We fine-tuned the NanoGPT framework by Karpathy \citep{Karpathy2022} using the standard GPT-2 Medium configuration. We selected the \emph{TinyStories} corpus \citep{eldan2023tinystoriessmalllanguagemodels} as the training dataset for its high entropy and structural diversity, which amplify the differences in optimization dynamics. All training runs were initialized with the same random seed, weight initialization, and learning rate schedule to ensure that performance differences arose solely from the choice of the optimizer.
 
 For all layers with one-dimensional outputs, we used AdamW with a learning rate of $1 \cdot 10^{-3}$. Since momentum exhibited negligible influence on training dynamics, it was held constant across all experiments. \figureref{fig:nanogptftexps} presents a comparative analysis of optimization performance. Notably, F-Muon is significantly more robust to the choice of learning rate than Muon and S-Muon.
 
 \begin{figure}[htbp]
 
\includegraphics[width=0.7\linewidth]{}
\caption{Comparison of validation loss for Muon, F-Muon, and S-Muon across a range of learning rates. Stars denote the best learning rates for each optimizer. $\alpha=1/2$ for F-Muon and S-Muon and \texttt{sign\_sgd\_coeff} = 1/3 for S-Muon.}\label{fig:nanogptftexps}

 \end{figure}
 
 \figureref{fig:nanogptftexps2} presents the training and validation loss curves at the optimal learning rate for each optimizer. While F-Muon and S-Muon maintain consistent behavior, vanilla Muon achieves the lowest loss overall, outperforming other algorithms by a small margin.
 
 \begin{figure}[htbp]
 
\includegraphics[width=\linewidth]{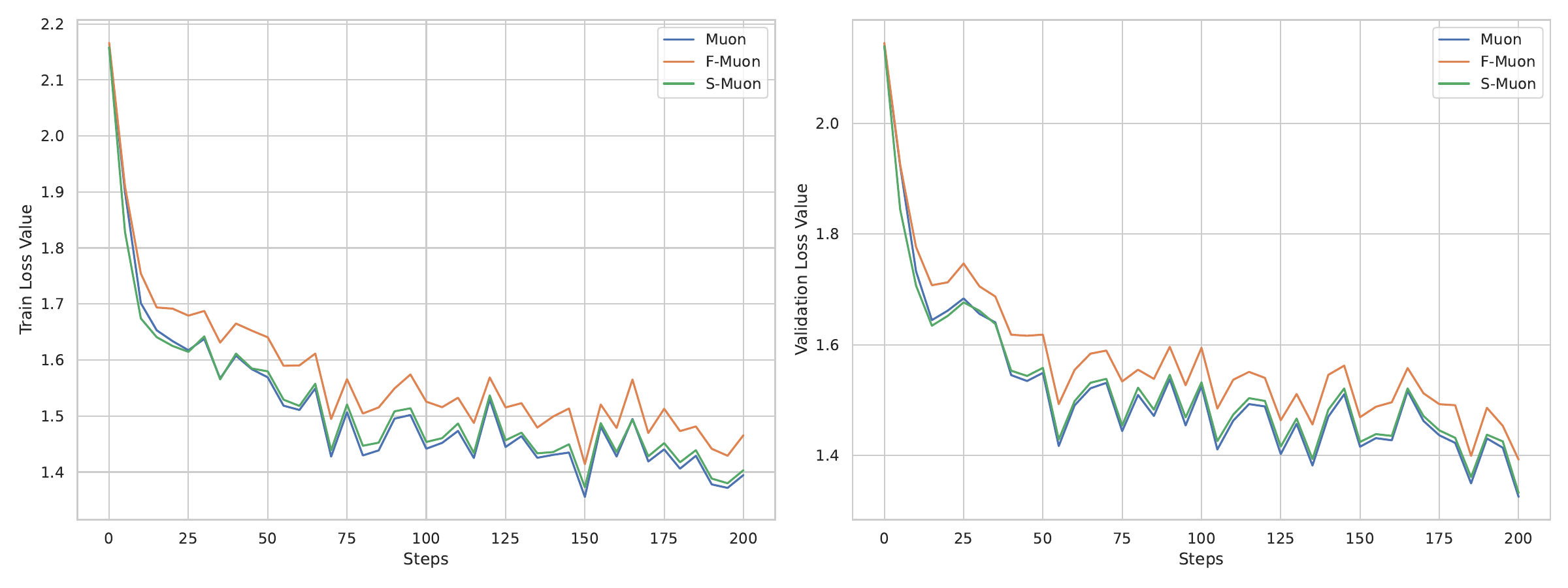}
\caption{Train and validation loss at the optimal learning rate for each optimizer.}\label{fig:nanogptftexps2}

 \end{figure}
 
 \section{Related Work and Discussion}
 
 \subsection{Algorithms for Vectors $\rightarrow$ Algorithms for Matrices}
 
 The updates of LMO optimizers exhibit striking similarities between vector $l_p$ norms and matrix Schatten $S_p$ norms, as illustrated in \tableref{tbl:mat_vs_vec_lmo}. These parallels extend beyond the update formulas themselves to empirical performance characteristics. SignSGD is related to Adam, as noted in \citep{bernstein2024oldoptimizernewnorm}, and both SignSGD and Muon perform well during training large models \citep{zhao2025deconstructing,liu2025muonscalablellmtraining}. NSGD maintains identical updates in both vector and matrix cases. Greedy Coordinate Descent can hardly be applied to high-dimensional problems, which provides a perspective on why one-rank Neon underperforms in such settings.
 
 \begin{table}[htbp]
 
\caption{LMO optimizers in Schatten $S_p$ norms and in $l_p$ norms. $g$ is the gradient. When it is a matrix, $g = \mU \mSigma \mV^\top$.}\label{tbl:mat_vs_vec_lmo}
     \begingroup
     \def\arraystretch{1.2}
     \resizebox{\linewidth}{!}{%
       \begin{tabular}{lccc}
         \arrayrulecolor{black}\toprule
         Algorithm                             & LMO constraint set $\mathcal{D}$ & LMO                                                         & Reference                            \\
         \arrayrulecolor{black}\hline
         \arrayrulecolor{black}\hline
         Normalized SGD                     & $l_2$-ball, $S_2$-ball          & $-\eta \tfrac{g}{\norm{g}_2} = -\eta \tfrac{g}{\normf{g}}$ & \citep{hazan2015beyond}              \\
         Momentum Normalized SGD            & Ball in $l_2$, or Ball in $S_2$ & $-\eta \tfrac{g}{\norm{g}_2} = -\eta\tfrac{g}{\normf{g}}$  & \citep{cutkosky2020momentum}         \\
         \arrayrulecolor{black}\hline
         SignSGD                            & Ball in Max-norm $l_\infty$     & $-\eta \sign(g)$                                            & \citep[Thm. 1]{bernstein2018signsgd} \\
         Signum                             & Ball in Max-norm $l_\infty$     & $-\eta \sign(g)$                                            & \citep[Thm. 3]{bernstein2018signsgd} \\
         \hdashline
         Muon                               & Ball in Spectral $S_\infty$     & $-\eta \mU\mV^\top$                                             & \citep{jordan2024muon}               \\
         \arrayrulecolor{black}\hline
         Gauss-Southwell Coordinate Descent & Ball in $l_1$                   & $-\eta \sum_{i \in \argmax|g_i^t|} \sign(g_i^t)e_i$                       & \citep[p. 19]{shi2016primer}         \\
         \hdashline
         Neon                               & Ball in Nuclear $S_1$           & $-\eta \vu_1 \vv_1^\top$                                        & This work                            \\
         \arrayrulecolor{black}\bottomrule
       \end{tabular}%
     }%
     \endgroup

 \end{table}
 
 \subsection{Theory behind Muon}
 Since Muon \citep{jordan2024muon} is a highly efficient optimizer for functions of weight matrices, substantial research has focused on two objectives: further improving its performance and theoretically explaining its success.
 
 There has been a prolonged gap in the theoretical understanding of Muon, excluding the simplistic derivation of Muon \citep{bernstein2025deriving} that was based on \citep{bernstein2024oldoptimizernewnorm}. This gap, in our view, remains incompletely closed. For example, Kovalev \citep{kovalev2025understanding} provided convergence guarantees for Muon in various settings, from which, however, Muon's empirical supremacy cannot be recovered. Indeed, although the obtained bounds depend on the norm choice, the convergence asymptotics remain the same as for NSGD and other optimizers: $K = \cO(\epsilon^{-4})$ in the $L$-smooth stochastic case.
 
 A similar limitation affects \citep{riabinin2025gluon}, where the $L$-smoothness assumption is replaced with a more practical $(L_0, L_1)$-smoothness. By estimating smoothness, the authors recovered the optimal fine-tuned step sizes reported by \citep{pethick2025training}. However, they have not theoretically demonstrated the optimality of the spectral or RMS-to-RMS norm, which is observed in practice, as our comparison with NSGD and Neon highlights.
 
 A common limitation of these analyses is their focus on convergence measured by the gradient norm. As we showed in our CIFAR experiments (\sectionref{sec:cifar_underhood}), the stochastic gradient norm may decrease by only a factor of ten when the training accuracy reaches 100\%.
 
 Another way to understand Muon is to focus on particular problems, like multi-class linear classification on separable data \citep{fan2025implicit}. Curiously, Neon has the same margin convergence rate as Muon because both are based on Schatten norms. That being said, the norms for margins are different: Muon has an implicit bias toward solutions that minimize the spectral norm of the weight matrix, while Neon tends to minimize its nuclear norm.

 We hypothesize that the stark performance discrepancy between Neon and Muon, both of which are described by the Stochastic Conditional Gradient method \citep{pethick2025training} or Gluon \citep{riabinin2025gluon} frameworks, lies in the structure of the norm ball or in the preconditioner interpretation of the algorithm \citep{pethick2025trainingneuralnetworksscale}, which warrants further investigation.

 \subsection{Improvements of Muon $\rightarrow$ of Fanions, F-Fanions, and S-Fanions as well}
 A large number of applications and improvements of Muon have been proposed in less than a year. Liu~et~al.~\cite{liu2025muonscalablellmtraining} adapted the algorithm for training language models larger than NanoGPT. Shah~et~al.~\cite{shah2025practical} enabled efficient hyperparameter transfer by combining Muon with maximal update parametrization. To construct their COSMOS optimizer, Chen~et~al.~\cite{chen2025cosmoshybridadaptive} applied computationally intensive updates of the SOAP optimizer \citep{vyas2025soap} to a low-dimensional ``leading eigensubspace'' while using memory-efficient methods like Muon for the remaining parameters. Amsel~et~al.~\cite{amsel2025polar} and Grishina~et~al.~\cite{grishina2025chebyshev} proposed more efficient alternatives to tuning the coefficients in the Newton-Schulz algorithm. Si~et~al.~\cite{si2025adamuon}, Veprikov~et~al.~\cite{veprikov2025preconditioned}, and Li~et~al.~\cite{li2025normuon} introduced AdaMuon, MuAdam, and NorMuon, respectively, which combine gradient orthogonalization with adaptivity.

 Fanios, F-Fanions, and S-Fanions benefit from these improvements and many others. They could be transformed into Drop-Fanions by updating only the selected layers, as in \citep{gruntkowska2025dropmuon}. They can be viewed as approximations of the Non-Euclidean Proximal Point Method for the corresponding norms \citep{gruntkowska2025noneuclideanbroximal}. They can be clipped to produce ClippedScion-like algorithms \citep{pethick2025generalizedgradientnormclipping}. They could be made more memory-efficient through the zero-order techniques that proved effective for Muon \citep{petrov2025zeromuon}. Finally, the results from \citep{shulgin2025inexactmuon} can be used to explain the robustness of Muon to the norm changes observed in our experiments and to theoretically derive faster yet effective approximate schemes for calculating the LMO; power iterations with a limited number of iterations represent a promising direction for this analysis.
 
 \subsection{The Nuclear Norm in the LMO}
 
 We discovered during the preparation of this article that the nuclear norm has already been explored in the context of the linear minimization oracle. Pethick et al. \citep{pethick2025sam} applied it to create $\nu$SAM, a novel sharpness-aware minimization technique. It would be interesting to substitute $\normn{\cdot}$ with $\normkfk{\cdot}^\dagger$ in their approach. Since the SAM neighborhood becomes more diverse, using $k > 1$ might enhance the accuracy boost from using SAM while preserving a small memory footprint and minimal time overhead if Dion-style power iterations are employed.
 
 \subsection{The LMO and Error Feedback}
 
 As previously mentioned, rank-$k$ unsharded Dion without error feedback and update scaling is equivalent to Fanion-$k$. Since error feedback is crucial for Dion, as demonstrated by the ablation study in \citep{ahn2025dion}, F-Fanions and S-Fanions would benefit from it as well. In federated learning, error feedback proves effective even for compressed Muon \citep{gruntkowska2025efmuon}. Fanions and S-Fanions offer a transmission advantage, requiring fewer bits: $\sum_{i=1}^k \vu_i \vv_i^\top$ can be efficiently transmitted as $\{\vu_1, \vv_1, \dots, \vu_k, \vv_k\}$ ($(m+n)\times k$ floats), while the sign component can be encoded in $m \times n$ bits. Thus, compression is inherently built into the representation. Moreover, there is an intriguing possibility to construct differentially private Fanions and S-Fanions using more optimal non-Gaussian noise, as was done with DP-SignSGD \citep{jang2024dplogsign}. We leave this for future research.

 \section{Conclusions}

 In this article, we addressed the central question of whether one should constrain by the spectral or any other operator norm in deriving Muon-like updates, and how alternative norms affect performance and computational cost. Our answer is that the choice of the matrix norm is remarkably flexible: properly tuned variants based on alternative norms can match or even slightly exceed Muon's performance on real-world tasks, while offering additional benefits such as improved learning rate robustness.

 We generalized several successful algorithms to linear minimization oracle (LMO) based algorithms using the family of norms dual to Ky Fan $k$-norms, yielding the Fanion family with low-rank updates. We further proposed the technique of combining them with Normalized SGD or SignSGD, creating the F-Fanion and S-Fanion families. Our experiments demonstrate that F-Muon and S-Muon achieve competitive performance with Muon on CIFAR-10 airbench and NanoGPT speedrun tasks, confirming that the underlying norm constraint can be significantly modified without sacrificing effectiveness.

 However, our results also reveal that not every LMO-based algorithm is effective: Neon (rank-one Fanion) underperforms despite sharing the same theoretical convergence asymptotics as Muon in existing bounds. We suggest that future work on non-Euclidean LMO algorithms should explain the observed empirical superiority of Muon over other Fanions in the corollaries to the convergence theorems, and ideally account for the robustness of F-Muon and S-Muon as well.

\section*{Declarations}

\noindent\textbf{Funding} The authors did not receive support from any organization for the submitted work.

\noindent\textbf{Competing Interests} The authors have no relevant financial or non-financial interests to disclose.

\noindent\textbf{Data Availability} No datasets were generated during the current study. All data used in the experiments is publicly available.

\noindent\textbf{Author Contributions} IO suggested using the nuclear norm in the Bernstein--Newhouse framework~\cite{bernstein2024oldoptimizernewnorm}. DM presented the problem at the MIPT optimization course, supervised the project, and helped to revise the manuscript. IK suggested using composite norms (though not the ones that induce Fanions, F-Fanions, or S-Fanions) and helped to draft and revise the manuscript. NK suggested the Lanczos algorithm as the most precise means to compute Fanions' updates, conducted experiments to prove this, and wrote \sectionref{sec:matrix-side}. AV conducted the finetuning of NanoGPT on TinyStories and wrote \sectionref{subsec:finetune}. All other work was done by AK: constructing Fanions, F-Fanions, and S-Fanions, conducting experiments on the smooth convex problem and on the CIFAR-10 airbench, pretraining NanoGPT and GPT-2 Medium, and writing the manuscript.

\begin{acknowledgements}
We thank Dmitry Kovalev and Egor Shulgin for helpful discussions.
\end{acknowledgements}

\bibliographystyle{spmpsci}
\bibliography{icomp2024_conference}
 
 \appendix
 \section{LMO for Neural Networks}\label{sec:lmo_for_nn}
 
 In a typical neural network, the objective function $F$ depends on a set of weight matrices $\{ \mW_1, \mW_2, \ldots \}$. The optimization framework we have described is applied in a layer-wise fashion. At each iteration $t$, a stochastic gradient $g(\mX^t, \xi^t)$ is computed using a mini-batch of data with the $\xi^t$ noise via backpropagation. This yields a separate gradient component, $\mG_i^t$, for each matrix $\mX_i$. The LMO-based update rule is then applied to each matrix $\mX_i$ using its corresponding gradient component $\mG_i^t$.
 
 \section{Proof of Lemma on the Dual to Convex Combinations}\label{sec:proof_dual_conv_comb}
 
 We provide here the proof of \lemmaref{lemma:dual_to_conv_comb}.
 
 \begin{proof}
   Let us first prove the lemma for the case $n = 2$. Denote $f(x) = \alpha_1 \norm{x}_{(1)}$ and $g(x) = \alpha_2 \norm{x}_{(2)}$, such that $\norm{x} = f(x) + g(x)$. Recall two standard facts:
   \begin{enumerate}
     \item For any norm $\norm{\cdot}$ and $\lambda>0$,
           \[
             (\lambda \norm{\cdot})^*(y) =
             \sup_{x}\bigl( \langle y,x \rangle - \lambda \norm{x}\bigr)
             = \delta_{\lambda \cB_{\norm{\cdot}^\dagger}}(y)\,,
           \]
           i.e., the indicator function of the scaled dual ball.
     \item The Fenchel conjugate of a sum satisfies
           \[
             (f+g)^*(y) = \inf_{u+v=y} \bigl(f^*(u) + g^*(v)\bigr)\,.
           \]
   \end{enumerate}
   Applying these to $f$ and $g$, we have
   \[
     f^*(u) = \delta_{\alpha_1 \cB_{\norm{\cdot}_{(1)}^\dagger}}(u)\,,
     \quad
     g^*(v) = \delta_{\alpha_2 \cB_{\norm{\cdot}_{(2)}^\dagger}}(v)\,.
   \]
   Thus,
   \[
     \norm{\cdot}^*(y)
     = (f+g)^*(y)
     = \inf_{u+v=y}
     \bigl(
     \delta_{\alpha_1 \cB_{\norm{\cdot}_{(1)}^\dagger}}(u)
     +
     \delta_{\alpha_2 \cB_{\norm{\cdot}_{(2)}^\dagger}}(v)
     \bigr)
     =
     \delta_{\alpha_1 \cB_{\norm{\cdot}_{(1)}^\dagger}+\alpha_2 \cB_{\norm{\cdot}_{(2)}^\dagger}}(y).
   \]
   By definition, the conjugate of a norm is exactly the indicator of its dual unit ball:
   \[
   \norm{\cdot}^*(y) = \delta_{\cB_{\norm{\cdot}^\dagger}}(y)\,.
   \]
   Therefore, $\cB_{\norm{\cdot}^\dagger} = \alpha_1 \cB_{\norm{\cdot}_{(1)}^\dagger} + \alpha_2 \cB_{\norm{\cdot}_{(2)}^\dagger}$.
 
   Now we prove the general case by induction. The base case ($n=2$) is already proven. Suppose that the assumption of the lemma holds for $n = k$. Then, for $n = k + 1$,
   \[
   \norm{x} = \sum_{i = 1}^k \alpha_i \norm{x}_{(i)} + \alpha_{k + 1}\norm{x}_{(k + 1)} = \norm{x}_{(1:k)} + \alpha_{k + 1}\norm{x}_{(k + 1)}\,.
   \]
   Applying the result for $n=2$ combined with the induction assumption, we obtain
   \[
   \cB_{\norm{\cdot}^\dagger} = \cB_{\norm{\cdot}_{(1:k)}^\dagger} + \alpha_{k + 1} \cB_{\norm{\cdot}_{(k + 1)}^\dagger} = \sum_{i = 1}^{k + 1} \alpha_i \cB_{\norm{\cdot}_{(i)}^\dagger}\,,
   \]
   which proves the lemma.\qed
\end{proof}
 
 \section[Norms for F-Muon and F-Neon]%
         {Norms $\normfstar{\cdot}^\dagger$ and $\normftwo{\cdot}^\dagger$}
 \label{sec:f2_fstar_norms}
 
 Based on \lemmaref{lemma:dual_to_conv_comb}, we immediately find $\normfstar{\cdot}^\dagger$, which is related to the F-Muon update. Indeed, after setting $\beta=1-\alpha$ and remembering that for smooth and bounded cases we can use $\min$ instead of $\inf$, we obtain
 \begin{equation}\label{eq:normfstardual}
   \normfstar{\mY}^\dagger = \min_{\mZ} \min_t\{t, s.t. \norms{\mZ}\leq \alpha t, \normf{\mY-\mZ}\leq (1-\alpha) t\}\,.
 \end{equation}
 
 If $\alpha = 1$, then $\mZ = \mY$, and we get $\normfstar{\mY}^\dagger = \norms{\mY}$. If $\alpha = 0$, then $\mZ = 0$, and we get $\normfstar{\mY}^\dagger = \normf{\mY}$.
 
 Similarly, we find $\normftwo{\cdot}^\dagger$, which is related to the F-Neon update:
 \begin{equation}\label{eq:normftwodual}
   \normftwo{\mY}^\dagger = \min_{\mZ} \min_t\{t, s.t. \normn{\mZ}\leq \alpha t, \normf{\mY-\mZ}\leq (1-\alpha) t\}\,.
 \end{equation}
 
 If $\alpha = 1$, then $\mZ = \mY$, and we get $\normftwo{\mY}^\dagger = \normn{\mY}$. If $\alpha = 0$, then $\mZ = 0$, and we get $\normftwo{\mY}^\dagger = \normf{\mY}$.
 
 Unfortunately, $\normfstar{\cdot}^\dagger$ and $\normftwo{\cdot}^\dagger$ do not have simple closed-form expressions and cannot be computed as easily as their duals.
 
 \section{Visualization of Different Matrix Norms}
 \subsection{Duals to F* and F2 Norms}
 
 It follows from \lemmaref{lemma:dual_to_conv_comb} that the norm ball in $\normfstar{\cdot}^\dagger$ is the Minkowski sum of the norm ball in $\alpha\normn{\cdot}$ and $(1-\alpha)\normf{\cdot}$, and the norm ball in $\normftwo{\cdot}^\dagger$ is the Minkowski sum of the norm ball in $\alpha\norms{\cdot}$ and $(1-\alpha)\normf{\cdot}$.
 
 In \figureref{fig:fduals}, we plot these norms. The x-axis and y-axis represent the singular values $\sigma_1$ and $\sigma_2$, respectively, of a matrix from $\R^{m\times n}$ with $\min\{m,n\}=2$.
 
 \begin{figure}[htbp]

     \subfigure[LMO balls for F-Muon for different $\alpha$]{\label{fig:fstardual}%
       \includegraphics[width=0.48\linewidth]{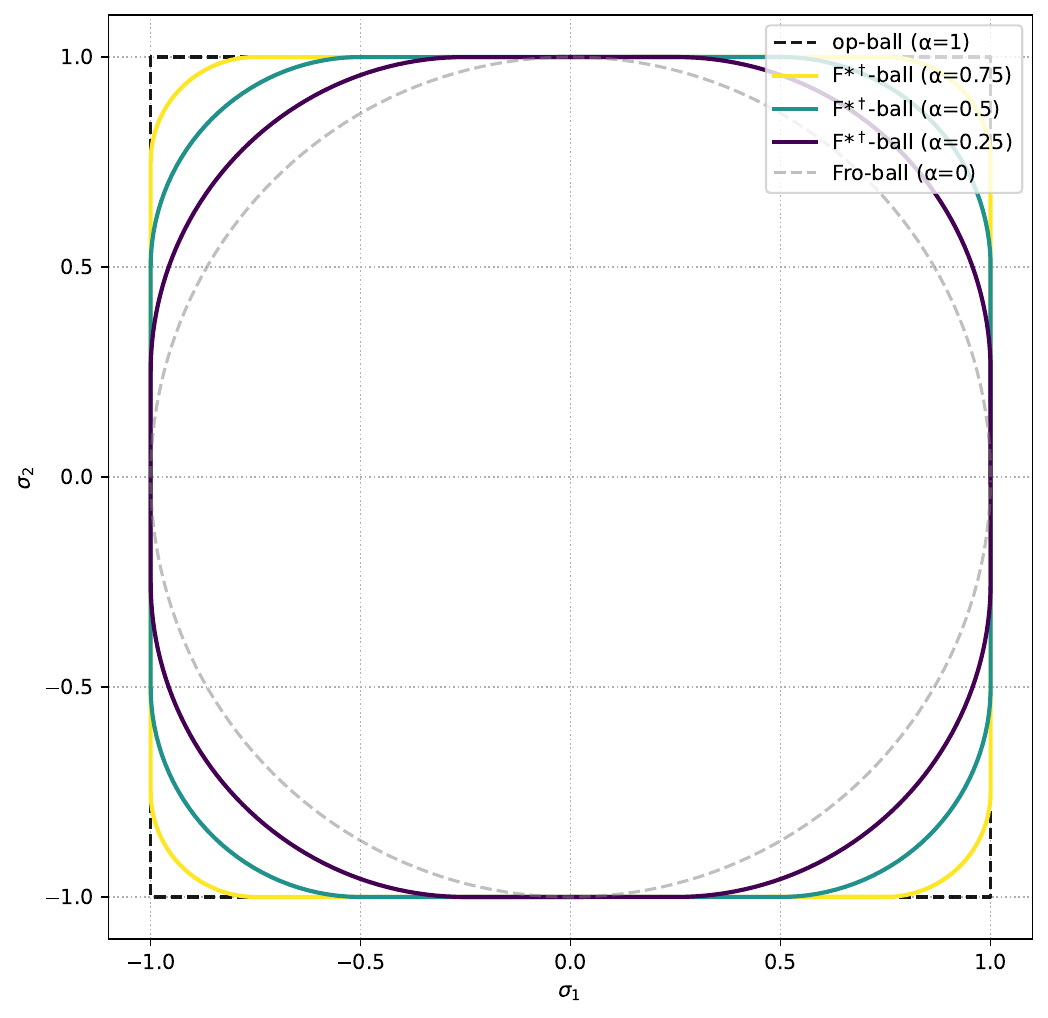}}%
     \hfill
     \subfigure[LMO balls for F-Neon for different $\alpha$]{\label{fig:ftwodual}%
       \includegraphics[width=0.48\linewidth]{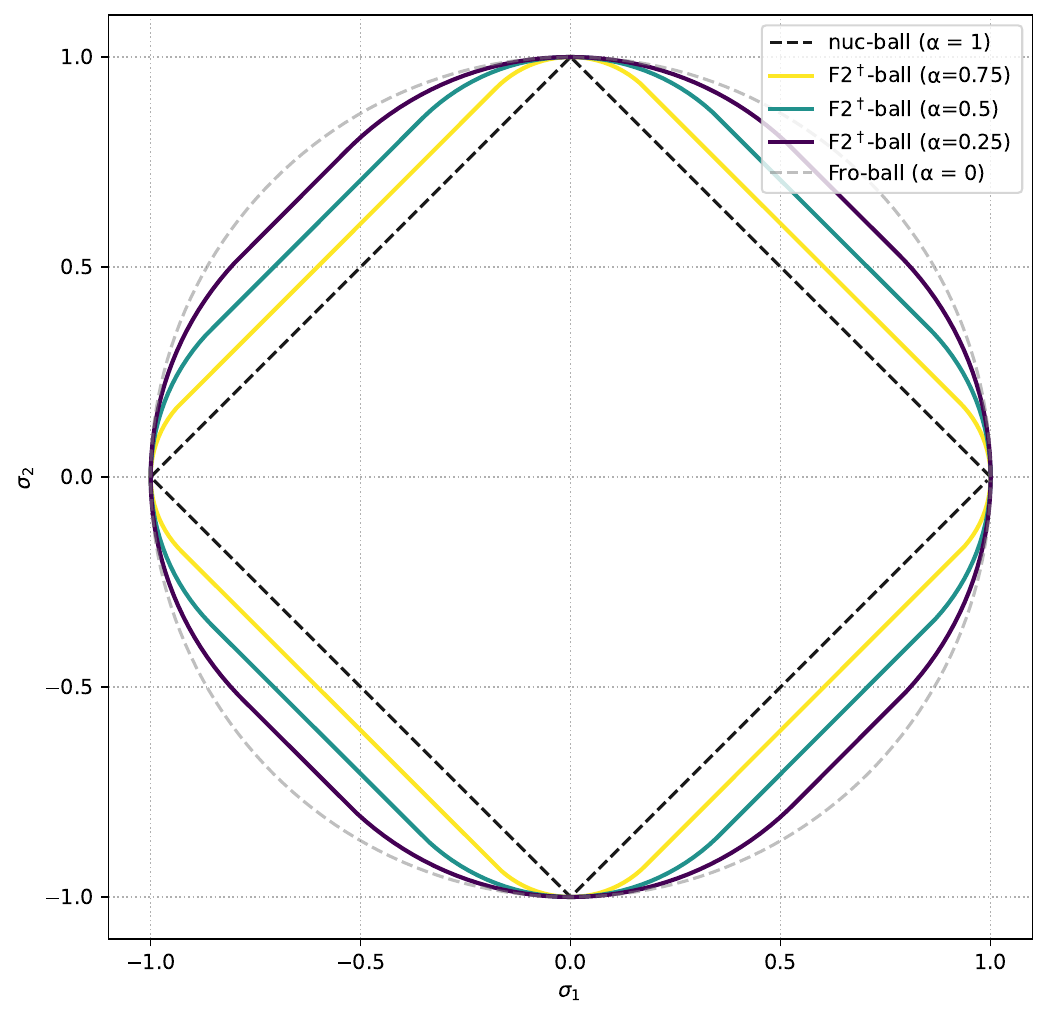}}
   
\caption{Balls in the duals to F* and F2 norms for different $\alpha$.}\label{fig:fduals}

 \end{figure}
 
 \subsection{A Ky Fan Norm and Its Dual}\label{subsec:kyfan2norm}
 
 While 1-balls in $l_\infty$, $l_1$, and $l_2$ norms are well-known from textbooks, the Ky Fan $k$-norm presents a more intricate structure.
 
 To showcase the complex geometry of the Ky Fan $k$-norm and its dual, we present \figureref{fig:kyfan_combined}, which displays the unit ball in the Ky Fan $2$-norm (\figureref{fig:kyfan}) and its dual (\figureref{fig:kyfandual}). The x-, y-, and z-axes represent the singular values $\sigma_1$, $\sigma_2$, and $\sigma_3$, respectively, of a matrix from $\R^{m\times n}$ with $\min\{m,n\}=3$. In this representation, we do not sort the singular values. We plot unit balls in the Top-$2$ norm $\max\{\abs{x}+\abs{y}, \abs{x}+\abs{z}, \abs{y}+\abs{z}\}$ and its dual norm $\max\{\max(\abs{x},\abs{y},\abs{z}), (\abs{x}+\abs{y}+\abs{z})/2\}$. The resulting balls exhibit significantly greater complexity than those in $l_\infty$, $l_1$, and $l_2$ norms.
 
 \begin{figure}[htbp]
  \centering
  \subfigure[The Ky Fan $2$-norm]{\label{fig:kyfan}%
      \includegraphics[width=0.45\linewidth]{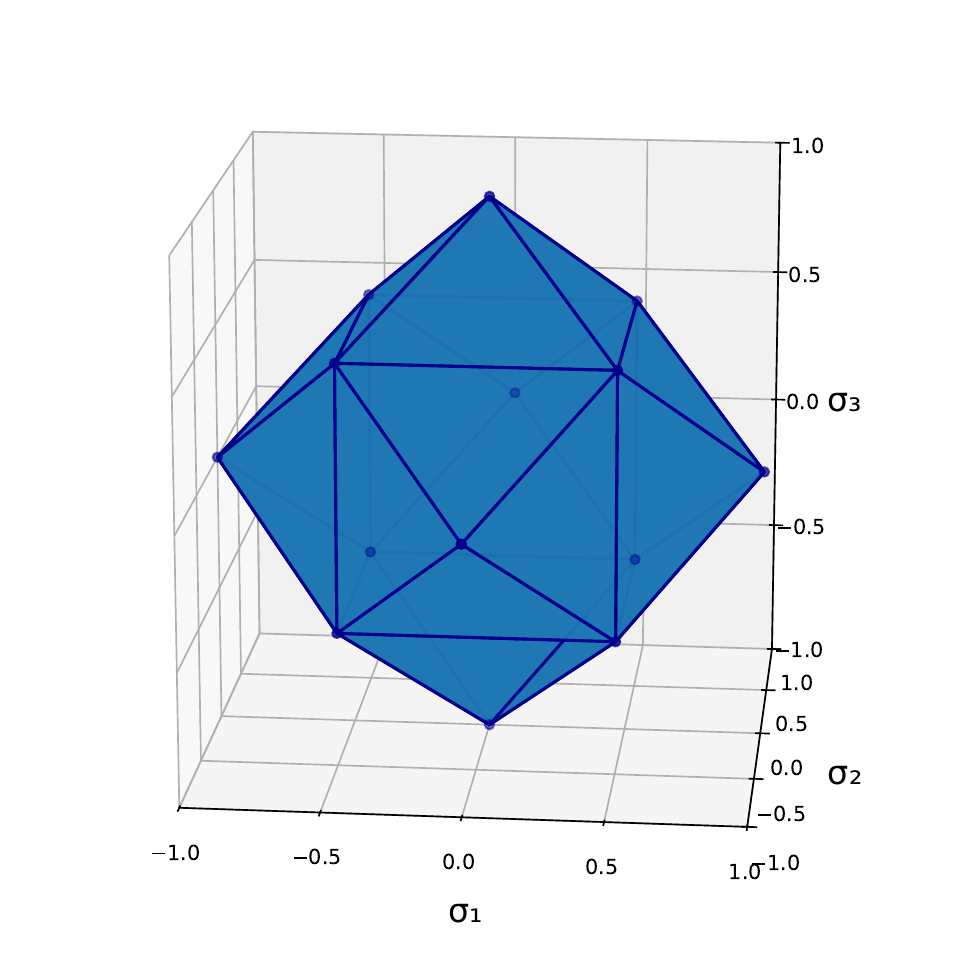}}%
  \hfill
  \subfigure[The dual of the Ky Fan $2$-norm]{\label{fig:kyfandual}%
      \includegraphics[width=0.45\linewidth]{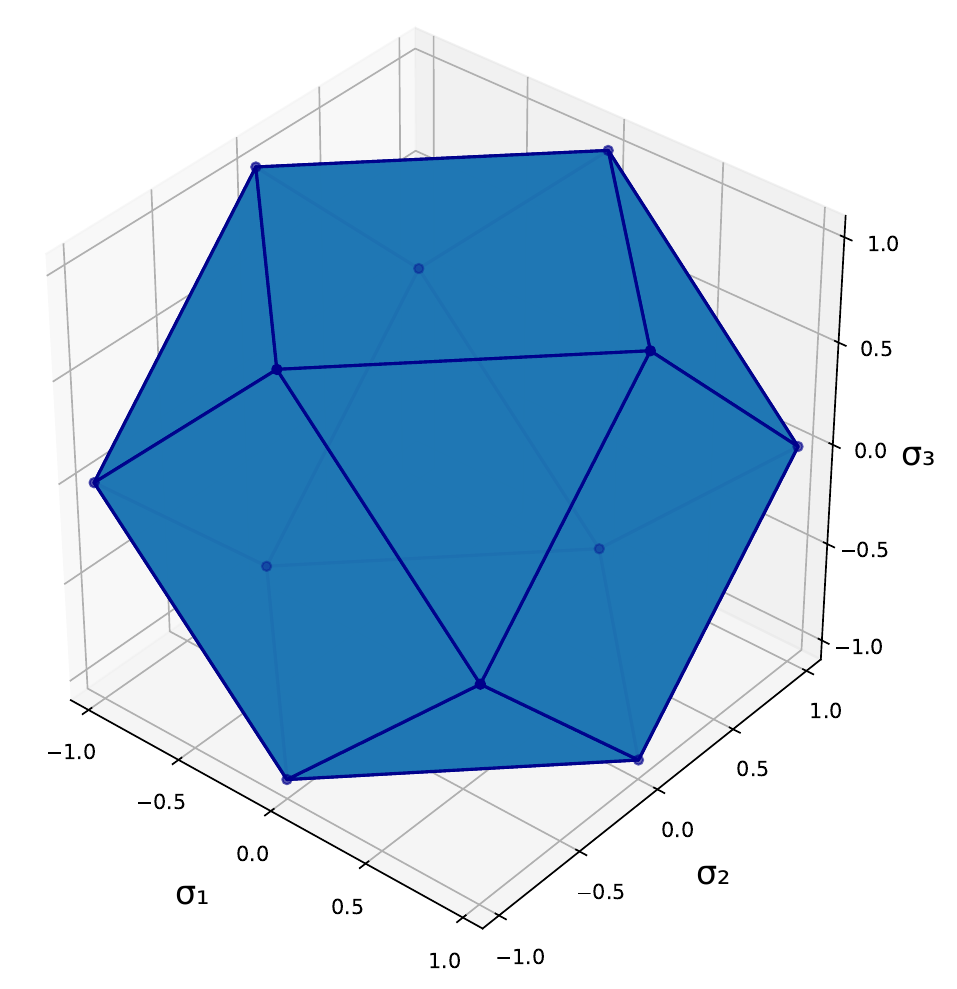}}

  \caption{The Ky Fan $2$-norm and its dual in the space of three singular values.}
  \label{fig:kyfan_combined}
\end{figure}
 
 These balls can be described more simply using the results from Yu~\cite{yu2012arithmetic}. The Ky Fan $2$-norm ball is an intersection of three $l_1$ balls in $(x,y)$, $(x, z)$, and $(y,z)$ spaces. The unit ball in the dual Ky Fan $2$-norm is an intersection of the unit ball in the $l_\infty$ norm and the $1/2$-ball in the $l_1$ norm.
 
 \section{More Details for the Smooth Convex Problem}
 \label{sec:lls_plots_section}
 The spectral and nuclear norms of the full gradients over iterations, as well as the loss over time, are shown in \figureref{fig:app_lls}. The poor performance of Fanions and F-Fanions in terms of speed for large $k$ is likely caused by an inefficient implementation of TRLan.
 
 \begin{figure}[htbp]

     \subfigure[Spectral norm of the full gradient]{\label{fig:lls_op_grad_norm}%
       \includegraphics[width=0.49\linewidth]{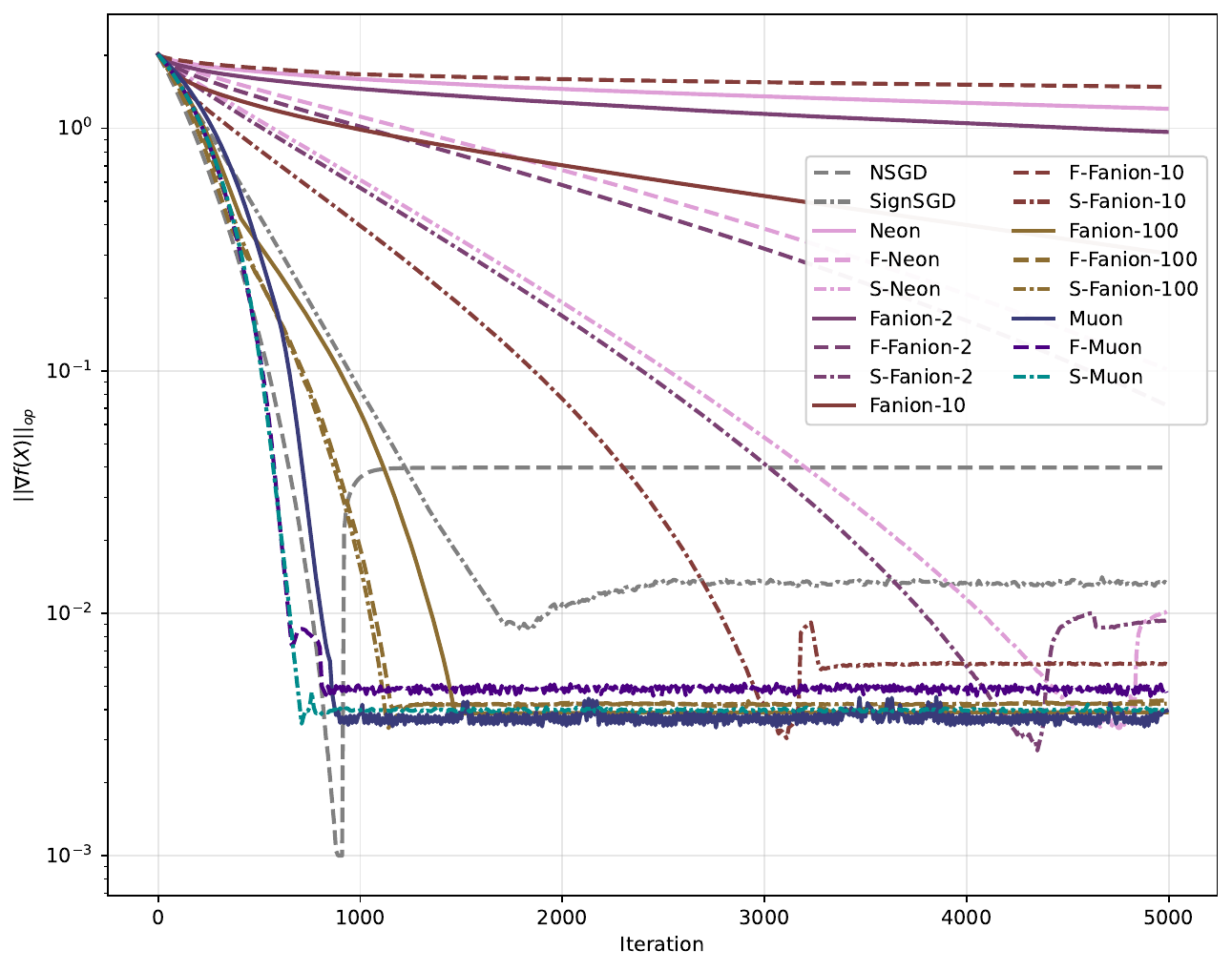}}%
     \hfill
     \subfigure[Nuclear norm of the full gradient]{\label{fig:lls_nuclear_grad_norm}%
       \includegraphics[width=0.49\linewidth]{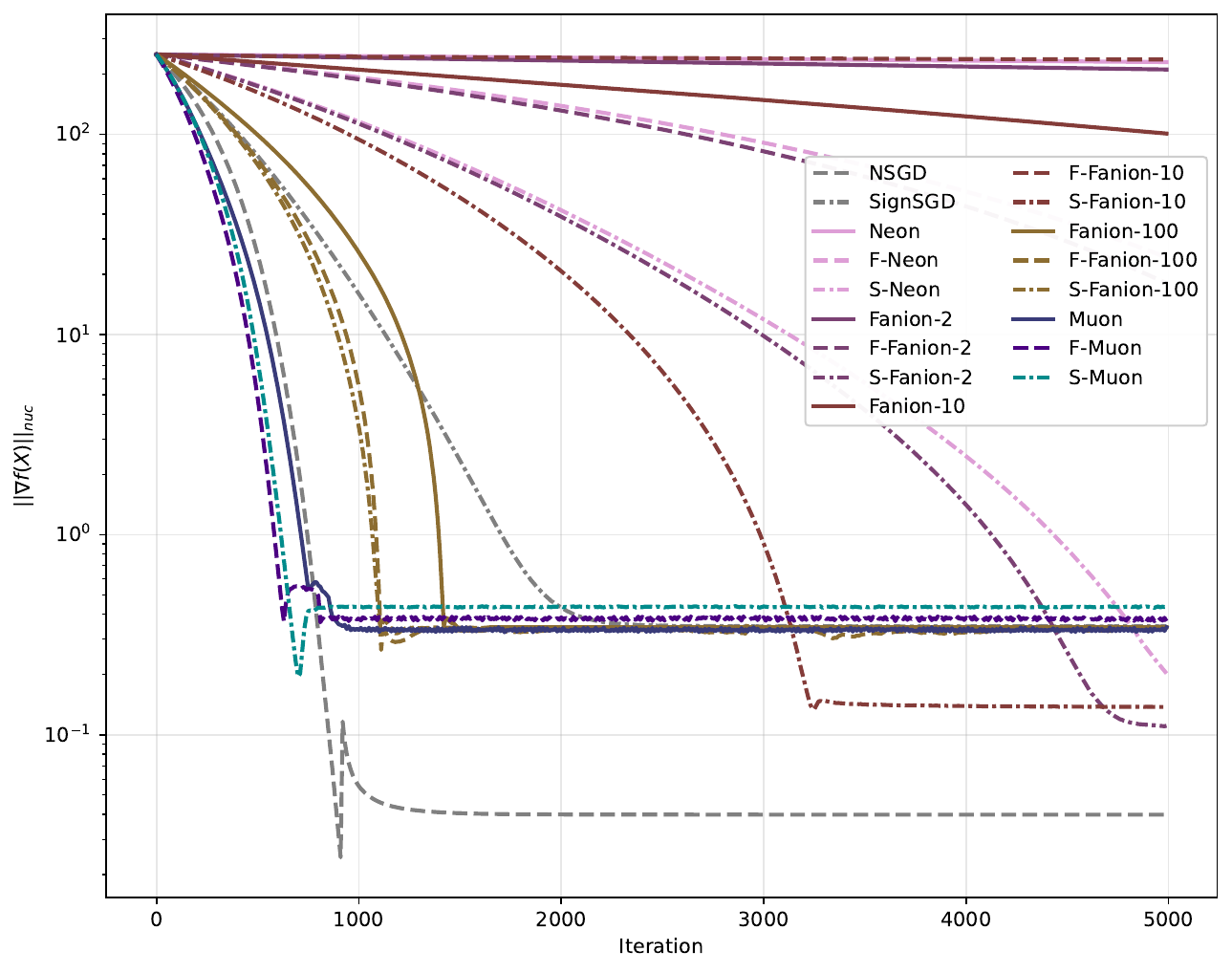}}%
     \vspace{1em}
     \subfigure[Loss over time]{\label{fig:lls_loss_time}%
       \includegraphics[width=0.49\linewidth]{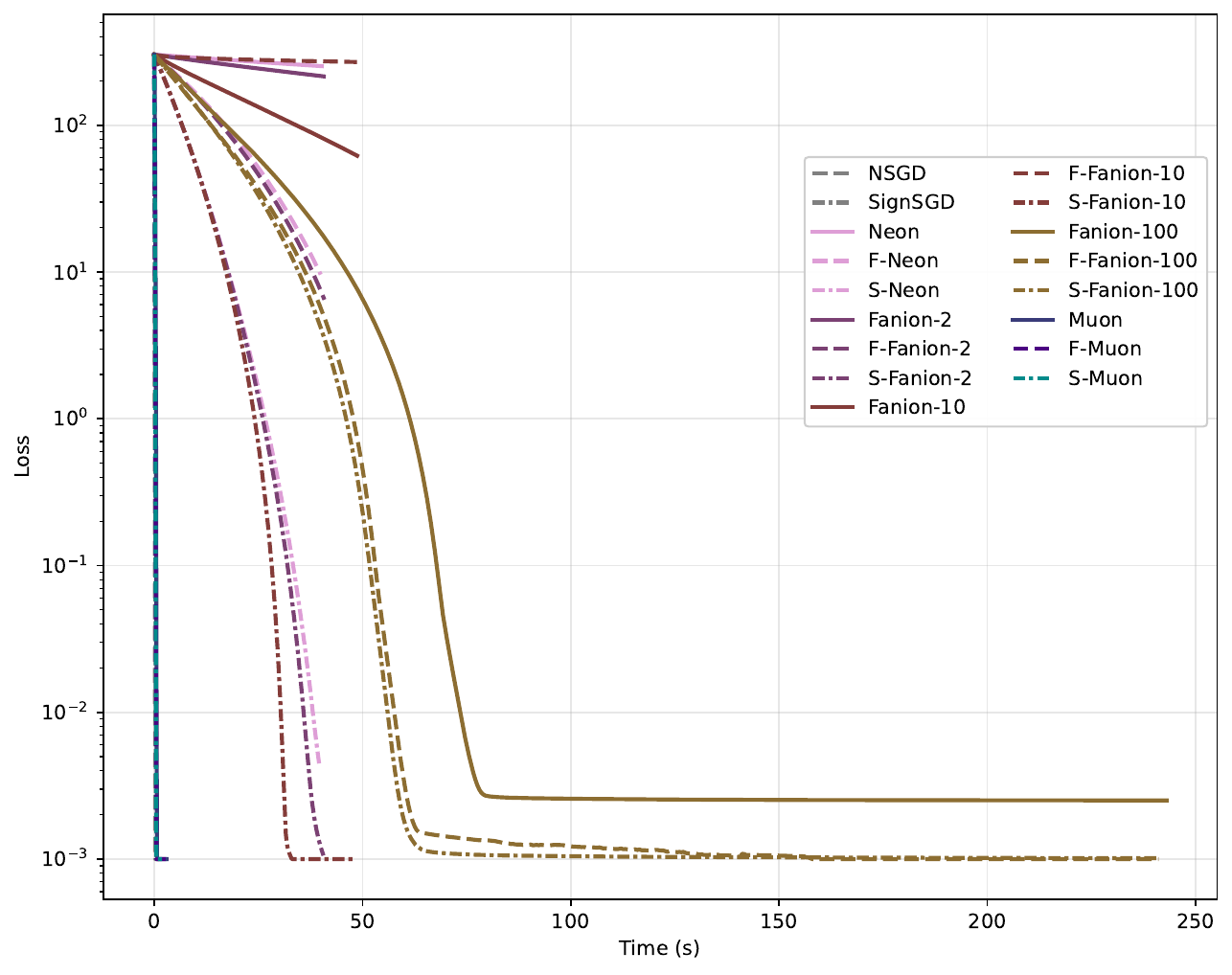}}
   
\caption{More plots for the smooth convex problem (\ref{eq:lls}) for a 500x500 matrix.}\label{fig:app_lls}

 \end{figure}
 
 \section{Gradient Norm Analysis for CIFAR-10 Experiments}
 \label{sec:cifar_underhood}
 
 Theoretical convergence bounds for Muon and other LMO-based algorithms are typically expressed in terms of gradient norms \citep{li2025noteconvergencemuon, kovalev2025understanding, pethick2025training, riabinin2025gluon,kovalev2025noneuclideansgdstructuredoptimization}. Accordingly, we measure these norms during training on a real deep learning problem to validate theoretical predictions.
 
 \begin{table}[htbp]
 
\caption{Parameters for CIFAR-10 airbench. \texttt{sign\_lr\_mult} for S-Muon is 0.003.}\label{tbl:cifar_lrs}
     \begin{tabular}{cccc}
       \arrayrulecolor{black}\toprule
       Method & \texttt{lr}  & \texttt{momentum}  & val\_accuracy, \% \\
       \arrayrulecolor{black}\hline
       NSGD & 0.5 & 0.95 & $91.6 \pm 0.52$ \\
       \hdashline
       SignSGD & 0.003 & 0.95 & $91.54 \pm 0.26$ \\
       \hdashline
       Muon & 0.24 & 0.6 & $94.01 \pm 0.10$ \\
       F-Muon & 0.40 & 0.6 & $94.01 \pm 0.13$\\
       \hdashline
       S-Muon & 0.42 & 0.63 & $94.03 \pm 0.13$ \\
       \hdashline
       Neon & 0.24 & 0.6 & $69.8 \pm 0.5$ \\
       F-Neon & 0.40 & 0.6 & $87.15 \pm 0.24$\\
       \hdashline
       Fanion-5 & 0.24 & 0.6 & $80.69 \pm 1.25$ \\
       F-Fanion-5 & 0.40 & 0.6 & $86.66 \pm 0.65$\\
       \arrayrulecolor{black}\bottomrule
     \end{tabular}%

 \end{table}
 
 We compare Muon, F-Muon, S-Muon, NSGD, SignSGD, Neon, F-Neon, Fanion-5, and F-Fanion-5 on CIFAR-10 airbench. NSGD and SignSGD are not heavily tuned. The hyperparameters of Neon, F-Neon, Fanion-5, and F-Fanion-5 are taken from Muon and F-Muon (see \tableref{tbl:cifar_lrs}). The validation accuracies reported after 8 epochs correspond to the airbench variant with weight normalization applied at each step. However, to remain faithful to the conditions assumed in convergence theorems \citep{kovalev2025understanding,riabinin2025gluon}, we do not normalize network weights later when logging gradient norms.
 
 We log train and validation accuracies (\figureref{fig:cifar_accs}), as well as the Frobenius, spectral, and nuclear norms of the gradients for \texttt{conv1.weight} and \texttt{conv2.weight} across all layers (see \figureref{fig:norms_conv1_vs_conv2} for norms in layer 2, and \figureref{fig:total_norms_cifar} for total gradient norms). During training, gradient norms decrease by at most one order of magnitude, while training accuracy reaches 100\% for Muon, S-Muon, and F-Muon. This observation suggests that convergence bounds of the form $\max\{\frac{A}{\epsilon}, \frac{B}{\epsilon^2}, \frac{C}{\epsilon^3}, \frac{D}{\epsilon^4}\}$ (such as Corollary 2 from Kovalev~\cite{kovalev2025understanding}) should not be simplified to $\frac{D}{\epsilon^4}$ when selecting an optimal algorithm for practical applications: final $\epsilon$ may be quite large.
 
 \begin{figure}[htbp]

     \subfigure[Training accuracy]{\label{fig:train_acc_cifar}%
       \includegraphics[width=0.7\linewidth]{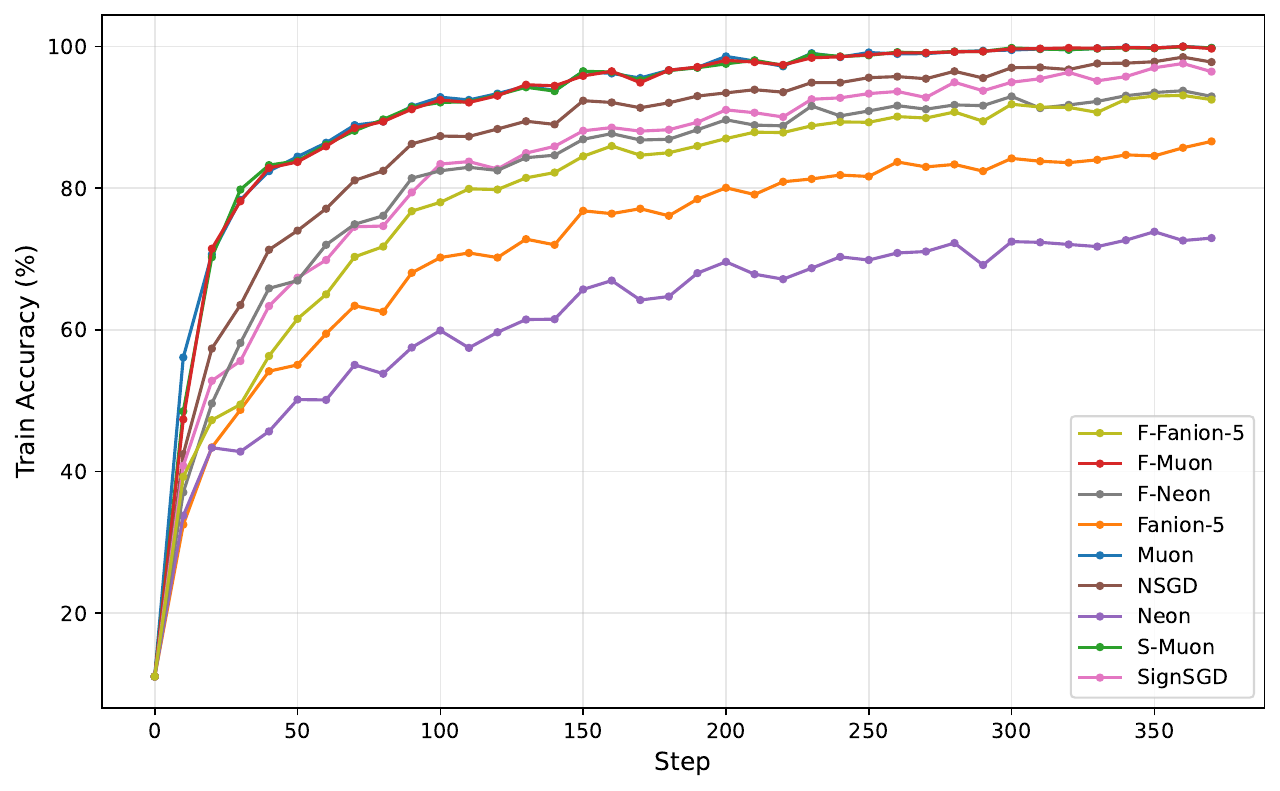}}%
     \vspace{1em}
     \subfigure[Validation accuracy]{\label{fig:val_acc_cifar}%
       \includegraphics[width=0.7\linewidth]{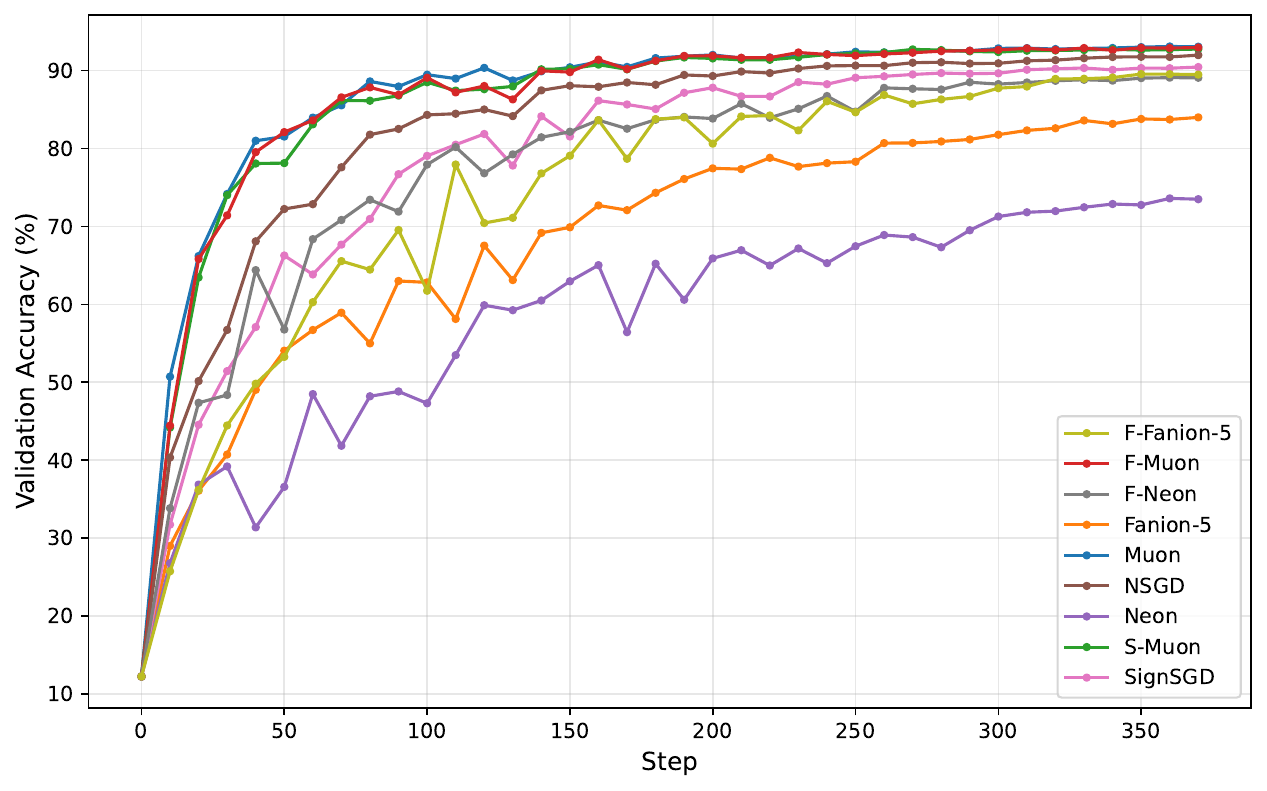}}
   
\caption{Different optimizers on CIFAR-10 airbench without weight normalization.}\label{fig:cifar_accs}

 \end{figure}

 \begin{figure}[htbp]

     \subfigure[Frobenius norm of layer2.conv1]{\label{fig:fro_conv1}%
       \includegraphics[width=0.45\linewidth]{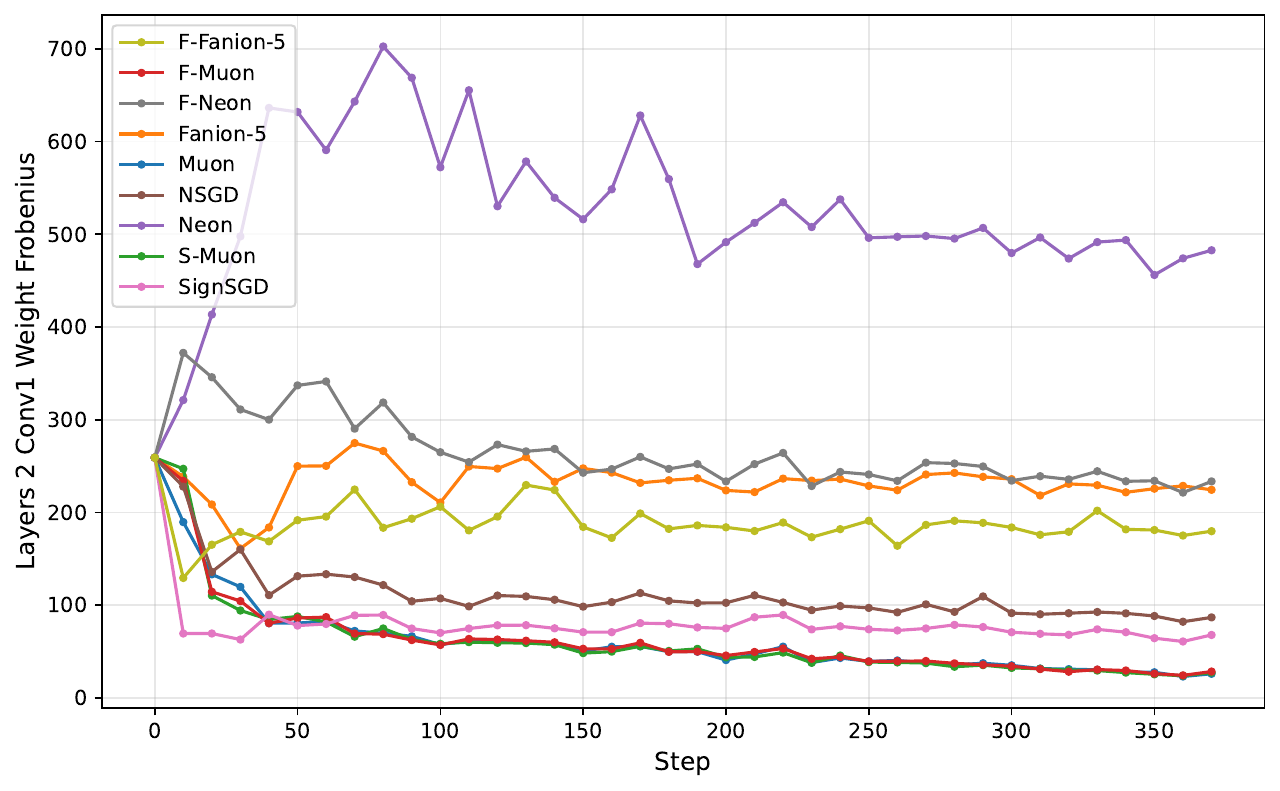}}%
     \hfill
     \subfigure[Frobenius norm of layer2.conv2]{\label{fig:fro_conv2}%
       \includegraphics[width=0.45\linewidth]{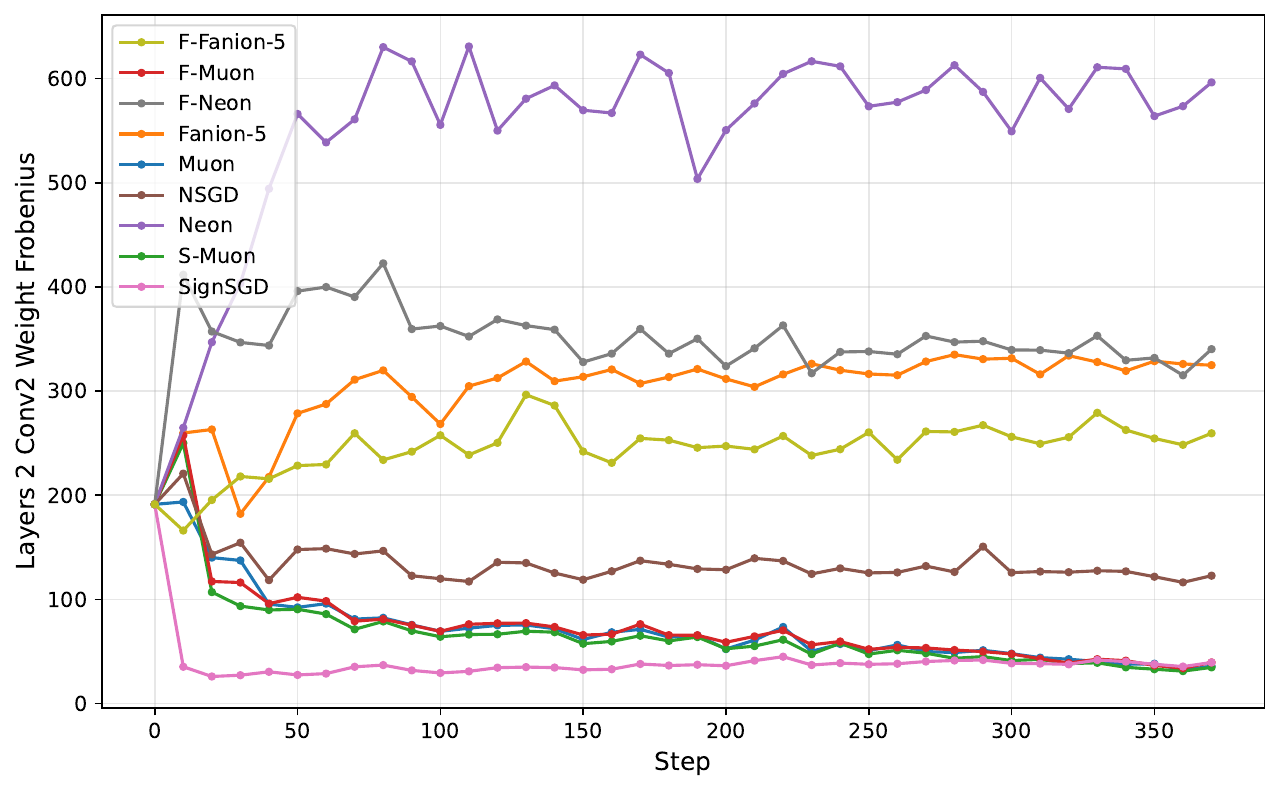}}%
     \vspace{1em}
     \subfigure[Spectral norm of layer2.conv1]{\label{fig:spec_conv1}%
       \includegraphics[width=0.45\linewidth]{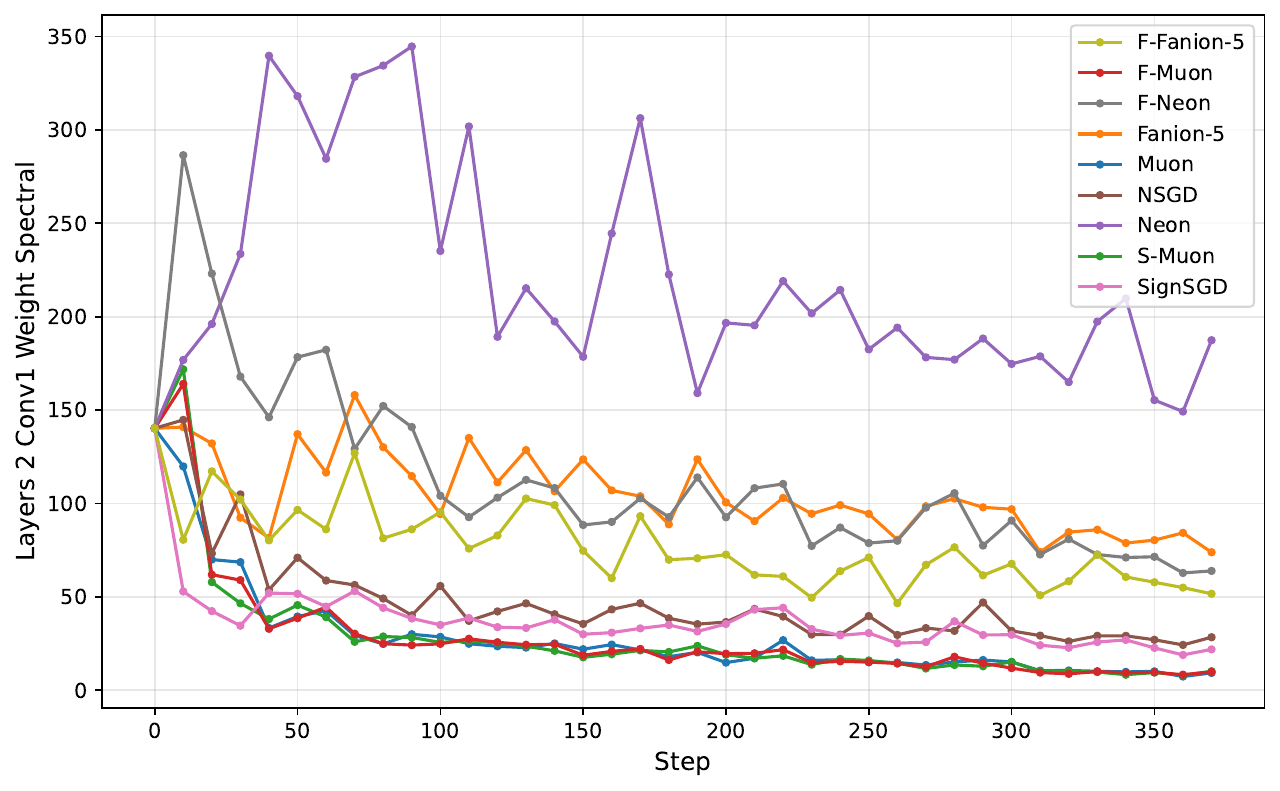}}%
     \hfill
     \subfigure[Spectral norm of layer2.conv2]{\label{fig:spec_conv2}%
       \includegraphics[width=0.45\linewidth]{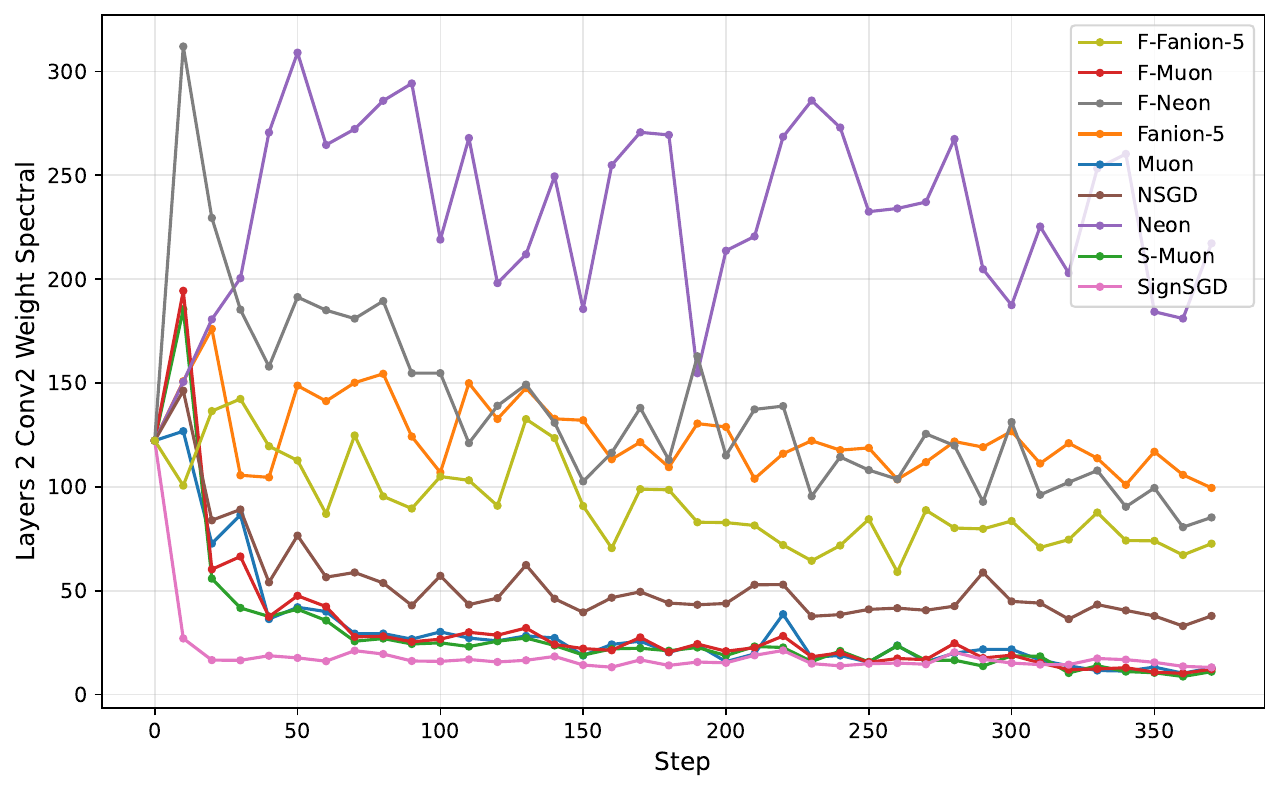}}%
     \vspace{1em}
     \subfigure[Nuclear norm of layer2.conv1]{\label{fig:nuc_conv1}%
       \includegraphics[width=0.45\linewidth]{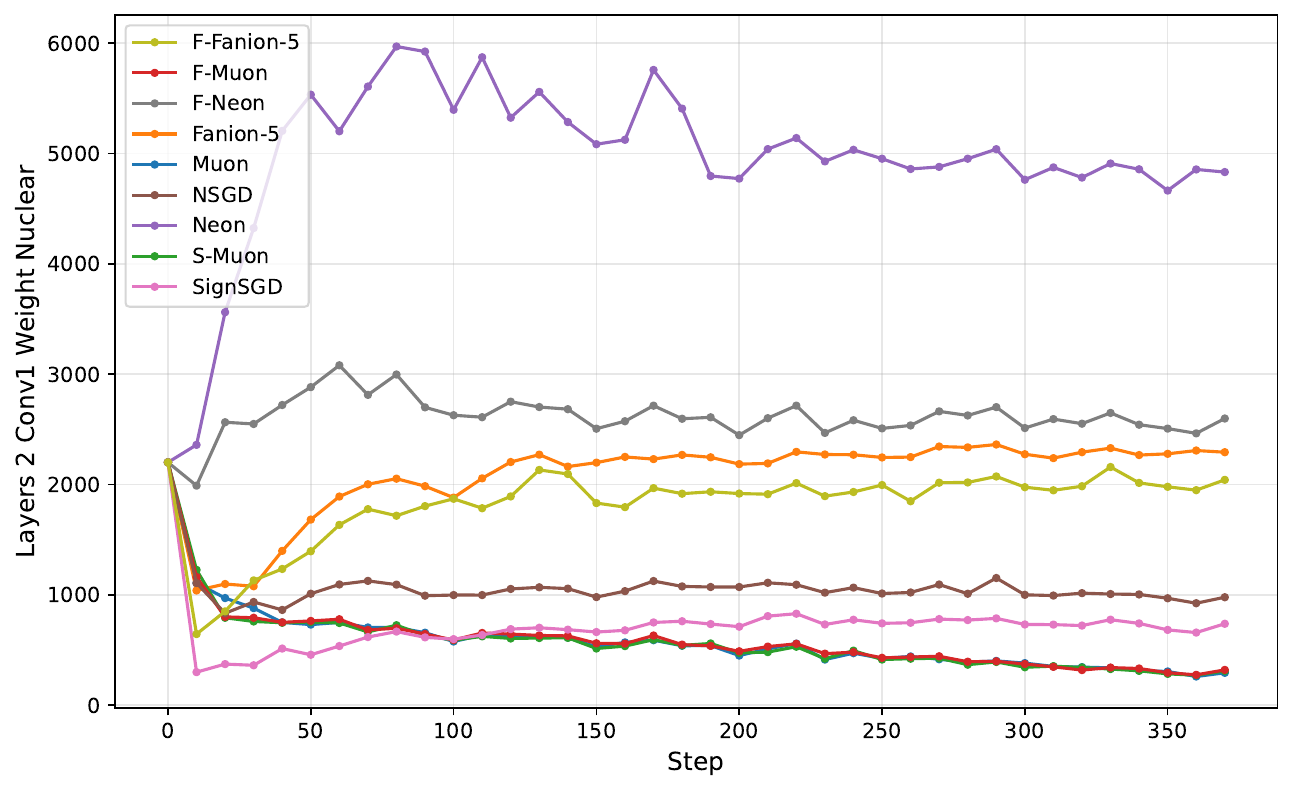}}%
     \hfill
     \subfigure[Nuclear norm of layer2.conv2]{\label{fig:nuc_conv2}%
       \includegraphics[width=0.45\linewidth]{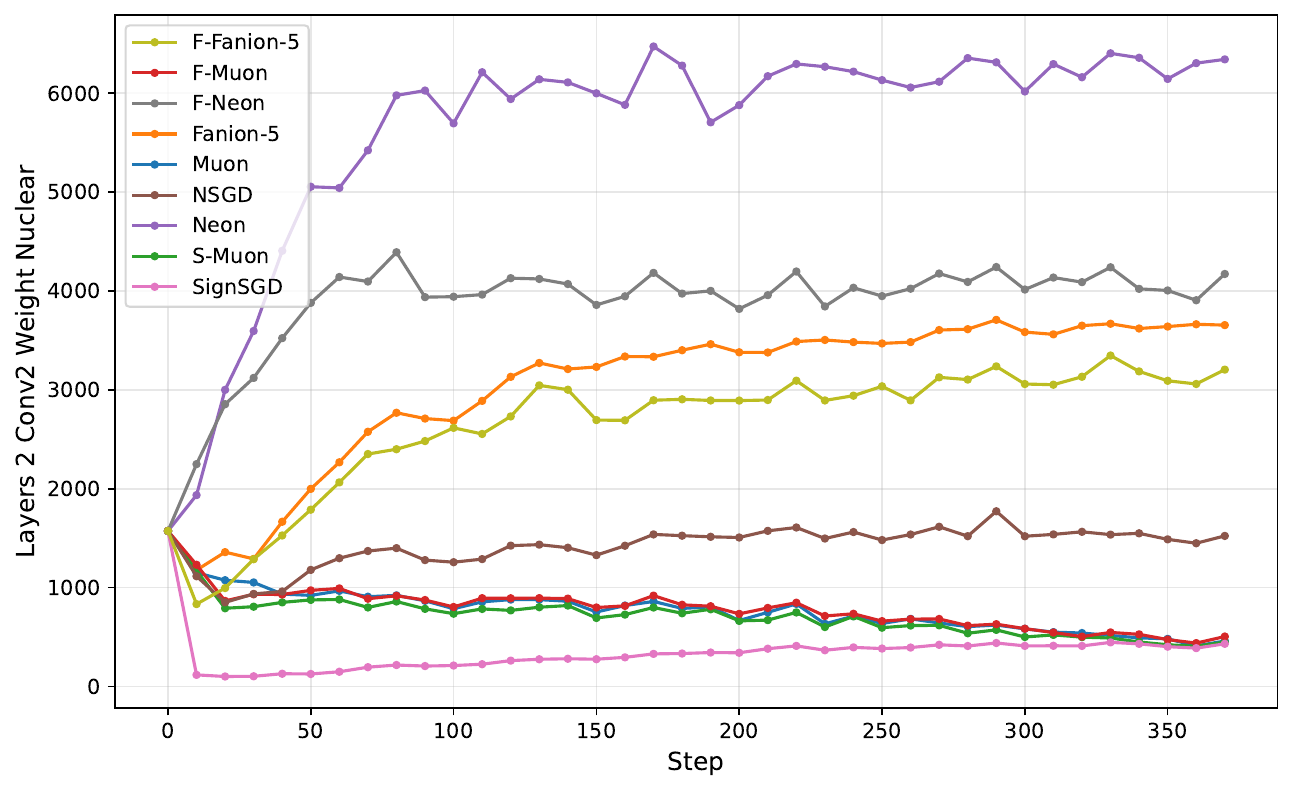}}
   
\caption{Matrix norms of layer2.conv1 (left) and layer2.conv2 (right) gradients of CIFAR CNN.}\label{fig:norms_conv1_vs_conv2}

 \end{figure}
 
 \begin{figure}[htbp]

       \subfigure[Total Frobenius norm]{\label{fig:total_fro_norm_cifar}%
         \includegraphics[width=0.7\linewidth]{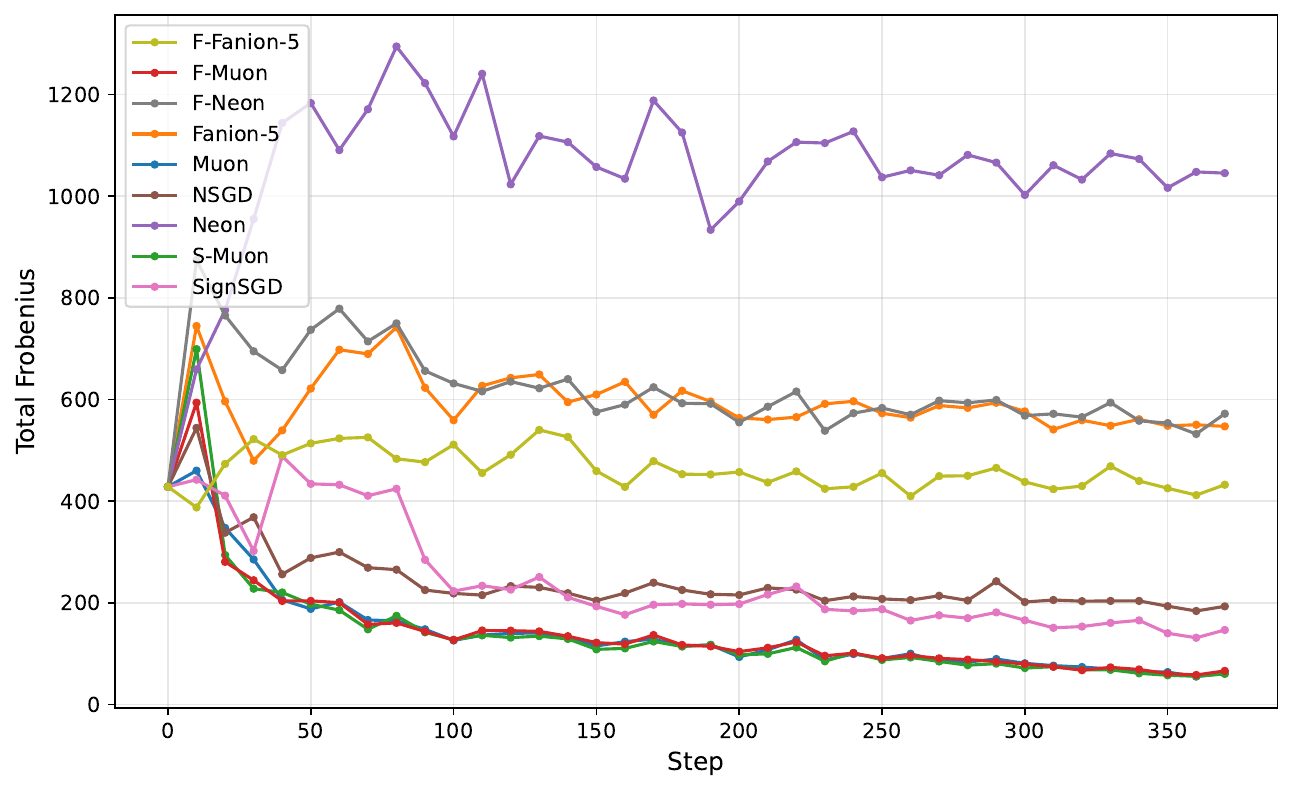}}%
       \vspace{1em}
       \subfigure[Total spectral norm]{\label{fig:total_spectral_norm_cifar}%
         \includegraphics[width=0.7\linewidth]{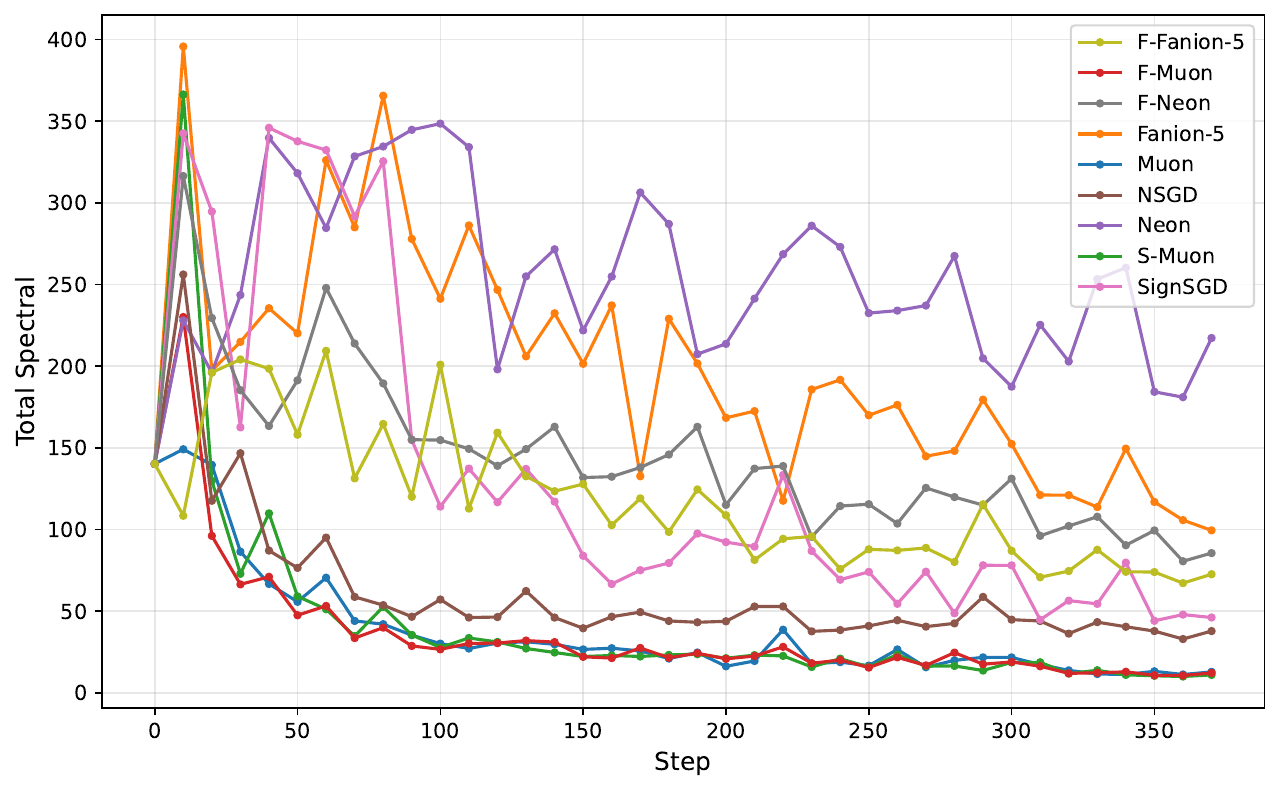}}%
       \vspace{1em}
       \subfigure[Total nuclear norm]{\label{fig:total_nuclear_norm_cifar}%
         \includegraphics[width=0.7\linewidth]{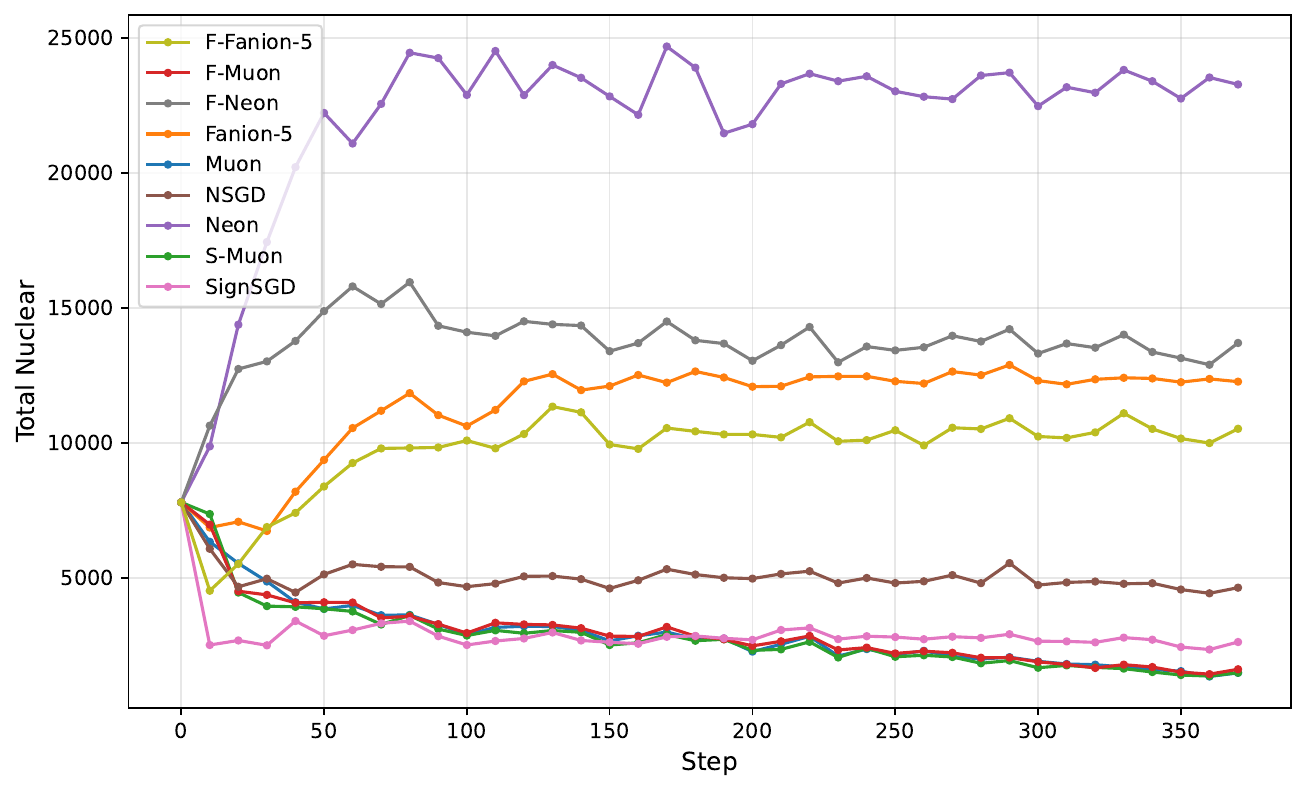}}
     
\caption{Matrix norms of the whole gradient of CIFAR CNN.}\label{fig:total_norms_cifar}

   \end{figure}
 
 \section{Technical Details of the Experiments}
 
 \paragraph{The smooth convex problem and CIFAR-10 airbench} Experiments were conducted on a single NVIDIA RTX A4000. The code was run with Python 3.10.13 and Pytorch~2.6.0+cu126 on a computer with an EPYC 7543 processor.
 
 \paragraph{NanoGPT and GPT-2 Medium Pretrain} We used the standard setting of 8 $\times$ NVIDIA H100 as documented in \citep{modded_nanogpt_2024}. The code was run with Python 3.10.18, Pytorch~2.10.0.dev20251124+cu126 on a computer with Xeon® Platinum 8462Y+ processor and 130 GB of RAM.
 
 \paragraph{NanoGPT Fine-tuning.} Experiments were conducted on a single NVIDIA RTX 4090 (24GB) GPU using PyTorch~2.8 with CUDA~12.8 and cuDNN~9.0, running on a workstation equipped with an Intel Core i9-14900KS CPU and 128 GB of RAM. No distributed or mixed-hardware training setups were used, ensuring a strictly controlled and unbiased comparison across optimizers.
 
 A uniform training protocol was applied to all runs, including an identical batch size, warm-up and cosine decay learning rate schedule, gradient clipping strategy, and validation split. Our optimization methods were integrated into the NanoGPT training loop without modifying the model architecture or preprocessing pipeline. Model quality was primarily evaluated using validation loss tracked over 200 training steps.
 
 \end{document}